\author{Eugene Ndiaye\footnote{Georgia Institute of Technology (ISyE). Correspondence to: endiaye3@gatech.edu} \qquad Ichiro Takeuchi\footnote{Riken AIP and Nagoya Institute of Technology.}}
\title{Continuation Path with Linear Convergence Rate}
\begin{document}
\date{}
\maketitle

\begin{abstract}
Path-following algorithms are frequently used in composite optimization problems where a series of subproblems, with varying regularization hyperparameters, are solved sequentially. By reusing the previous solutions as initialization, better convergence speeds have been observed numerically. This makes it a rather useful heuristic to speed up the execution of optimization algorithms in machine learning. We present a primal dual analysis of the path-following algorithm and explore how to design its hyperparameters as well as determining how accurately each subproblem should be solved to guarantee a linear convergence rate on a target problem. Furthermore, considering optimization with a sparsity-inducing penalty, we analyze the change of the active sets with respect to the regularization parameter. The latter can then be adaptively calibrated to finely determine the number of features that will be selected along the solution path. This leads to simple heuristics for calibrating hyperparameters of active set approaches to reduce their complexity and improve their execution time.
\end{abstract}

\section{Introduction}

To reduce the computational complexity when solving a parsimonious statistical learning task, some solvers use progressive estimation of the most important variables. This prioritization makes it possible to reduce the dimensionality of the problem and to allocate a greater calculation effort to the most important parts. The benefits are essentially savings in memory and algorithm execution time. As main example, let us consider the least squares with $\ell_1$ regularization commonly known as Lasso \citep{Tibshirani96, Chen_Donoho95}. Denoting $y \in \bbR^n$ the observation vector and $X = [X_1, \cdots, X_p] \in \bbR^{n \times p}$ the matrix of explanatory variables, the Lasso estimator is a solution of the following optimization problem:
\begin{equation}\label{eq:lasso_formulation}
\tbeta{\lambda} \in \argmin_{\beta \in \bbR^p} \frac{1}{2}\norm{y - X\beta}_{2}^{2} + \lambda \norm{\beta}_1 =: P_{\lambda}(\beta) \enspace,
\end{equation}
where the regularization parameter $\lambda > 0$, calibrates the trade-off between data fitting and sparsity of the solutions. Under technical assumptions on the data generation process, it has strong theoretical guarantees on its performance \citep{Buhlmann_VanDeGeer11}. A particularly interesting property of the Lasso estimator is that some of its components are exactly equal to zero depending on the value of $\lambda$. Those coordinates that are equal to zero correspond to the columns of the matrix $X$ that can be ignored in the prediction of $y$. So a natural idea is to detect these variables and eliminate them as early as possible in the resolution of problem~\eqref{eq:lasso_formulation}. \\

Early strategies to effectively solve the problem~\eqref{eq:lasso_formulation} are based on homotopy continuation strategies. Noting that the mapping $\lambda \mapsto \tbeta{\lambda}$ is piecewise linear with explicitly computable breakdown points, exact solution of the Lasso can be computed with parametric programming \eg \texttt{Lars-Lasso} type algorithms \citep{Osborne_Presnell_Turlach00a, Osborne_Presnell_Turlach00b, Efron_Hastie_Johnstone_Tibshirani04, Rosset_Zhu07}. While appealing, the worst case complexity of such methods is exponential in the number of features $p$ \citep{Mairal_Yu12, Gartner_Jaggi_Maria12}, and suffer from several numerical instabilities which make them unpractical in large scale scenarios. Nevertheless, by renouncing the exact calculation, approximate continuation strategies prove to be particularly effective \citep{Friedman_Hastie_Hofling_Tibshirani07}. Instead of directly solving \Cref{eq:lasso_formulation} at $\lambda$ from scratch, they solve a sequence of Lasso problem with a gradually decreasing parameters $\{\lambda_0 > \cdots > \lambda_t> \cdots > \lambda_T = \lambda\}$ starting from a large regularizer $\lambda_0$. Each subproblem is solved approximately \eg using first order algorithm, and initialized with the previous solution.
Under noisy linear model, restrictive condition on the design matrix $X$ and where $\lambda$ is larger than the noise level, it has been shown in \citep{Xiao_Zhang13, Zhao_Liu_Zhang18}, that the Lasso objective function is strongly convex along the solution path and then enjoys a global geometric rate of convergence when the subproblems are solved with proximal gradient descent. However, such theoretical analysis leverage oracle information from distribution of the data which is unknown. The current implementation chooses an \textit{arbitrary} grid sequentially defined as $\lambda_{t+1} = \lambda_t \times s$ for a scalar $s \in (0, 1)$ and without proper early stopping rule that precisely control the optimization error. When it comes to solve \Cref{eq:lasso_formulation} at a prescribed $\lambda$, the actual continuation strategies still leave several open questions. It is unclear how to properly design a sequence of hyperparameters exhibiting desirable optimization properties and how to setup an early stopping condition on the intermediate problems.
One of the main challenge is to understand how the intermediate solutions are actually minimizing the target objective~\eqref{eq:lasso_formulation} \ie how the sequence of optimization errors $\{P_{\lambda}(\tbeta{\lambda_t}) - P_{\lambda}(\tbeta{\lambda})\}_{t \in \bbN}$ decrease with respect to $\lambda_t$ and how the sequence of supports $\{j \in [p]:\, \tbeta{\lambda_t}_j \neq 0 \}_{t \in \bbN}$ can be controlled in order to limit the computational complexity.

\paragraph{Contributions.}
We provide an additional theoretical analysis of approximate continuation path for convex composite optimization problems and its impact on sparse Lasso regularization. Based on smooth and strongly convex regularity assumptions on the loss function, we introduce a stepwise descent lemma along the regularization path that suggests a stopping criterion to avoid unnecessary computations and can guarantee a desired amount of progress toward convergence on the target problem. We obtain an approximate continuation path such that the sequence of optimization errors $\{P_{\lambda}(\tbeta{\lambda_t}) - P_{\lambda}(\tbeta{\lambda})\}_t$ is guaranteed to decrease at a geometric rate. Our result holds for a general convex regularizer, any converging iterative algorithm and does not involves any unknown statistical information. Furthermore, connecting with the sparsity of the Lasso and the safe screening rule literature, we highlight how the regularization parameter can be sequentially decreased while maintaining a desired number of active variables along the path. Both of these analysis lead to specific geometric grids. Thus, they can stand as a theoretical justification of the current practices based on a default geometric grid adopted in popular package like \texttt{scikit-learn} and $\texttt{glmnet}$. These strategies are accompanied with a natural working set heuristics, whose subproblem sizes are tightly related to the generated path. It efficiently solves sparse optimization problem in large scale settings by taking benefits of both feature prioritizations and pathwise optimization with a well designed early stopping criterion.

\paragraph{Notation.}
For a non zero integer $n$, we denote $[n]$ to be the set $\{1, \cdots, n\}$. Given a proper, closed and convex function $f: \bbR^n \to \bbR \cup \{+\infty\}$, we denote $\dom f = \{x \in \bbR^n: f(x) < +\infty\}$. Its Fenchel-Legendre conjugate is $f^*:\bbR^n \to \bbR \cup \{+\infty\}$ defined by $f^*(x^*) = \sup_{x \in \dom f} \langle x^* , x \rangle - f(x)$.
\paragraph{Regularity assumption.}
A function $f$ is $\nu$-smooth and $\mu$-strongly convex if for any $z$ and $z_0$ in $\dom f$, $L_f(z, z_0):= f(z) - f(z_0) - \langle\nabla f(z_0), z - z_0\rangle$ satisfies
\begin{equation*}\mathrm{(A)} \qquad
\frac{\mu}{2}\norm{z - z_0}^2 \leq L_f(z, z_0) \leq  \frac{\nu}{2}\norm{z - z_0}^2 \enspace.
\end{equation*}

\section{Problem Setup}

%
Given a target regularization parameter $\lambda > 0$ and a tolerance level $\epsilon>0$, an approximate continuation path determines a sequence
\begin{align*}
 \lambda_0 > \lambda_{1} > \lambda_{2} > \cdots > \lambda_{T} = \lambda \enspace,
\end{align*}
and the corresponding sequence of accuracy levels
\begin{align*}
 \epsilon_{0}, \epsilon_{1}, \epsilon_{2}, \ldots, \epsilon_{T} = \epsilon \enspace,
\end{align*}
where $\lambda_0$ is a large regularization parameter, $T$ is the size of the path, and $\epsilon_{t}$, $t=0, \ldots, T$ is the parameter which indicates how accurately one should solve the problem at the corresponding $\lambda_{t}$. In an approximate continuation path method, starting from an initial estimate $\beta^{(\lambda_0)}$ at $\lambda_0$, the sequence of approximate solutions $\beta^{(\lambda_{0})}, \ldots, \beta^{(\lambda_{T})}$ are computed up to the accuracy $\epsilon_{t}$ by using the previous approximate solution as a warm-start initialization. The goal is to design the sequences of regularization parameters $\{\lambda_t\}_{t=0}^T$ and accuracy parameters $\{\epsilon_t\}_{t=0}^T$ such that the total computational cost to obtain the optimal target solution $\hat{\beta}^{(\lambda)}$ is minimized.
Despite its popular use, the design of existing continuation strategies remains vaguely motivated and a principled way to determine $\{\lambda_t\}_{t=0}^T$ and $\{\epsilon_t\}_{t=0}^T$ is still lacking.
Let us consider a more general convex composite optimization problems involving a sum of a data fitting function plus a regularization term that enforces specific regularity structure: 
\begin{align}\label{eq:primal}
\tbeta{\lambda} \in \argmin_{\beta \in \bbR^p} f(X\beta) + \lambda \Omega(\beta) = P_{\lambda}(\beta).
\end{align}
The Lasso in \Cref{eq:lasso_formulation} is a particular example where $f(\cdot) = \norm{y - \cdot}^2/2$ and $\Omega$ is the $\ell_1$ norm encouraging sparsity in the solution depending on the parameter $\lambda$; the larger it is, the sparser the solution.

The dual problem associated to \Cref{eq:primal} is formulated as
\begin{align}\label{eq:dual}
\ttheta{\lambda} \in \argmax_{\theta \in \bbR^n} -f^*(-\lambda \theta) - \lambda \Omega^*(X^\top\theta) = D_{\lambda}(\theta) \enspace.
\end{align}
%
%
For the Lasso, $f^*(\cdot) = (\norm{y}^2 - \norm{y + \cdot}^2)/2$ and $\Omega^*(X^\top\theta)$ is equal to zero if for any $j$ in $[p]$, $|X_{j}^{\top}\theta| \leq 1$ and infinite otherwise.
If there is no ambiguity, we will denote $\hat\beta_t = \tbeta{\lambda_t}$ and $\hat\theta_t = \ttheta{\lambda_t}$. \\

Given $(\beta, \theta) \in \dom P_{\lambda} \times \dom D_{\lambda}$, the duality gap is defined as
\begin{equation*}
\Gap_{\lambda}(\beta, \theta) = P_{\lambda}(\beta) - D_{\lambda}(\theta) \enspace.
\end{equation*}
By weak duality, it bounds the optimization error 
$$P_{\lambda}(\beta_t) - P_{\lambda}(\tbeta{\lambda}) \leq \Gap_{\lambda}(\beta_t, \theta_t) \enspace.$$
For the $\ell_1$ regularization and given a suboptimal primal vector $\beta_t$, one can build a corresponding dual feasible vector by simply rescaling the residual (a formula for general $\Omega$ can be found in \cite{Ndiaye_Fercoq_Salmon20}):
\begin{equation*}
\theta_t = \frac{-\nabla f(X\beta_t)}{\alpha_t} \text{ with } \alpha_t = \max(\lambda_t, \normin{X^\top \nabla f(X\beta_t)}_{\infty}) \enspace.
\end{equation*}
The sequence of early stopping rule can be defined through the duality gap. Its variation \wrt to regularization parameter $\lambda$ will be our key tool for more understanding of approximate continuation techniques.\\

Our core assumption is that the function $\mathbb{R}^n: z \mapsto f(z)$ is smooth and strongly convex\footnote{\textbf{Not} the composite function $\mathbb{R}^p: \beta \mapsto f(X\beta)$ whose strong-convexity depends on the smallest eigenvalue of the matrix $X$.}
For the Lasso, $f(z) := \norm{y - z}^2/2$ and is $1$-strongly convex. Without loss of generality, we will also assume that $\inf_z f(z) = 0$. For example, this can be enforced by replacing $f$ with $f - \inf_z f(z)$ in the definition of the loss function without changing the optimal solutions.

\begin{figure*}
  \centering
  \subfigure{\includegraphics[width=0.49\columnwidth]{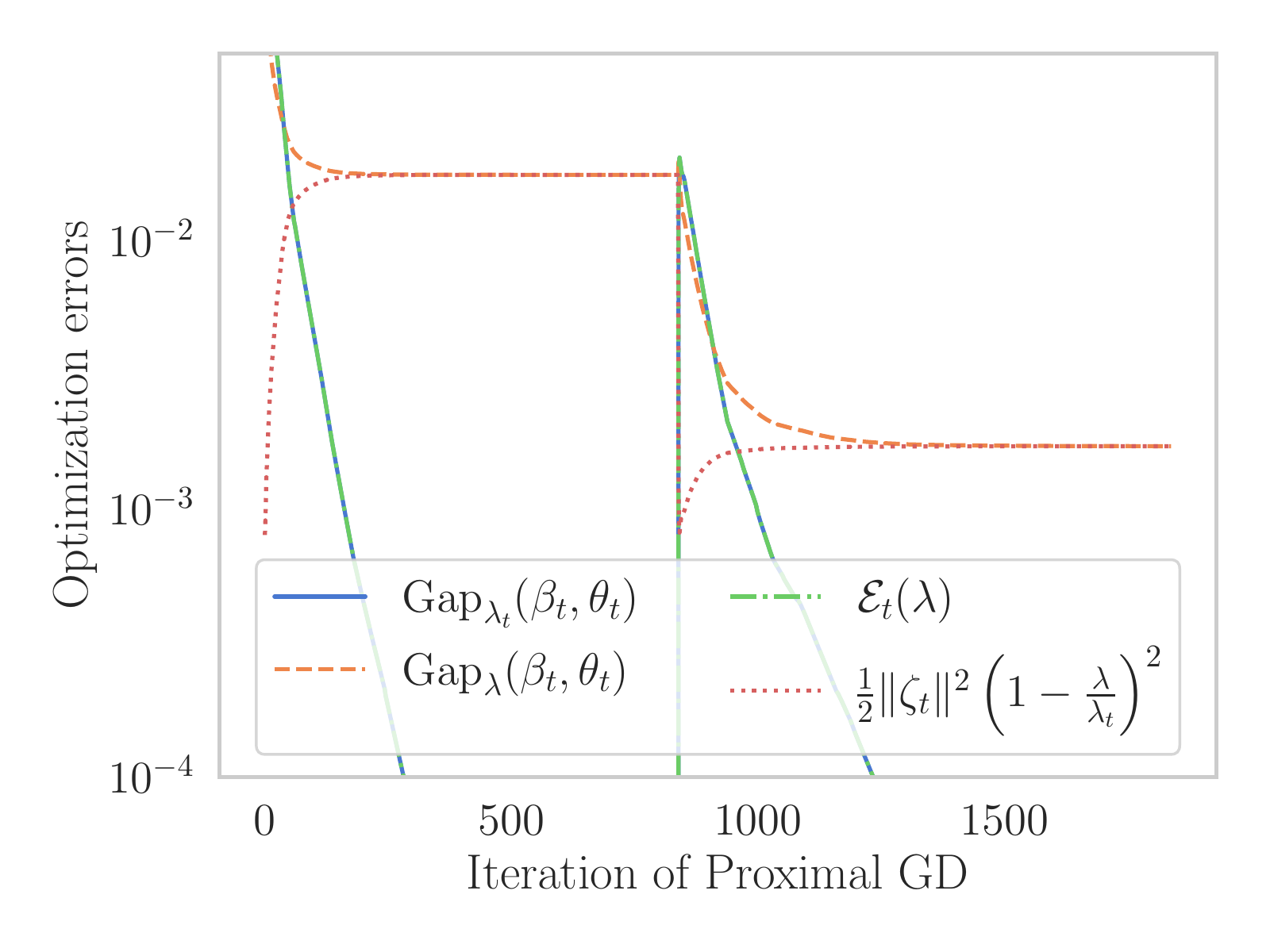}}
  \subfigure{\includegraphics[width=0.49\columnwidth]{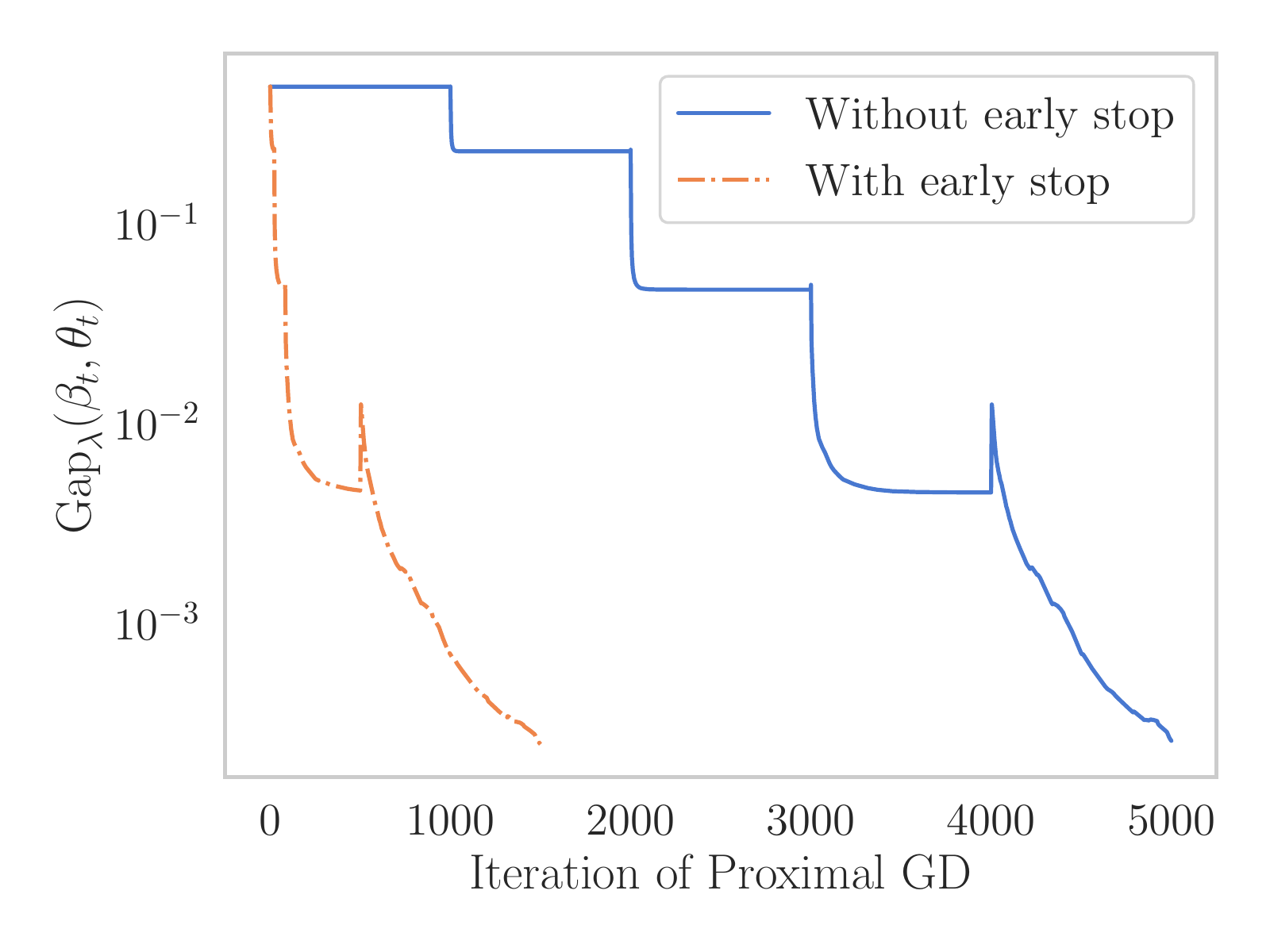}}
  \caption{Evolution of the optimization gaps for a target parameter $\lambda = \frac{\lambda_{\max}}{100}$ with $\lambda_{\max}=\normin{X^\top y}_{\infty}$. On the left, we display the iterates for two intermediate parameters $\lambda_1 = \frac{\lambda_{\max}}{20}$ and $\lambda_2 = \frac{\lambda_{\max}}{50}$. On the right, we use a geometric grid of size $5$ from $\lambda_{\max}$ to $\lambda$.
  The data are synthetically generated as $X \sim \mathcal{N}(0, \mathrm{Id}) \in \bbR^{n \times p}$ with sample size $n=500$ and $p=1000$ features, $y = X\beta^{\star} + \varepsilon$ where $\beta^{\star}$ follows a Laplace distribution with $80\%$ of the coordinates are set to zero and $\varepsilon$ is a Gaussian noise. The optimization algorithm used is a vanilla proximal gradient descent.}
\end{figure*}

\section{Sequential Regularization with Linear Rate}

In this section, we provide a convergence rate analysis of the approximate continuation path framework. To do so, we build on a warm start error bound that quantify the variation of the duality gap of a vector at two different regularization hyperparameter. As a starting point, we have from \citep{Ndiaye_Le_Fercoq_Salmon_Takeuchi2018}:
\begin{align*}
\Gap_{\lambda}(\beta_t, \theta_t) &=
\frac{\lambda}{\lambda_{t}} \Gap_{t} + 
f^*(-\lambda \theta_t) - f^*(-\lambda_{t} \theta_t) +
\left(1 - \frac{\lambda}{\lambda_{t}}\right)[f(X\beta_t)
+ f^*(-\lambda_{t} \theta_t)] \enspace.
\end{align*}
Hence, any bound on the dual variation (with respect to $\lambda$) $f^*(-\lambda \theta_t) - f^*(-\lambda_{t} \theta_t)$ automatically leads to a bound on the duality gap.

\begin{lemma}[\citep{Ndiaye_Le_Fercoq_Salmon_Takeuchi2018}]\label{eq:warmstart_bound}
Let $f$ satisfies assumption $(A)$ and let us define $\zeta_t := -\lambda_t \theta_t$. Then, we have
\begin{align*}
V_\nu(\lambda_t, \lambda) &\leq \Gap_{\lambda}(\beta_t, \theta_t) - \mathcal{E}_t(\lambda) 
\leq V_\mu(\lambda_t, \lambda) \enspace,
\end{align*}
where
\begin{align*}
V_\tau(\lambda_t, \lambda) &= \frac{1}{2\tau} \norm{\zeta_t}^2 \left(1 - \frac{\lambda}{\lambda_t} \right)^2 \,, \text{ for } \tau > 0 \enspace,\\
\mathcal{E}_t(\lambda) &= \frac{\lambda}{\lambda_t}\Gap_t + \left( 1 - \frac{\lambda}{\lambda_t}\right)\Delta_t \enspace,\\
\Delta_{t} &= f(X\beta_t) - f(\nabla f^*(\zeta_{t})) \enspace,\\
\Gap_t &= \Gap_{\lambda_t}(\beta_t, \theta_t) \enspace. 
\end{align*}
\end{lemma}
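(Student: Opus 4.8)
The plan is to collapse the two-sided inequality into a single statement about the Bregman divergence of the conjugate $f^*$ evaluated between $\zeta_t$ and its rescaling, and then to apply assumption $(A)$ to $f^*$.

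First I would plug the displayed pre-lemma identity for $\Gap_{\lambda}(\beta_t,\theta_t)$ into $\Gap_{\lambda}(\beta_t,\theta_t) - \mathcal{E}_t(\lambda)$. The two terms carrying $\Gap_t$ cancel exactly, and since $\Delta_t = f(X\beta_t) - f(\nabla f^*(\zeta_t))$ we have $f(X\beta_t) - \Delta_t = f(\nabla f^*(\zeta_t))$; using also $\zeta_t = -\lambda_t\theta_t$ so that $-\lambda\theta_t = \frac{\lambda}{\lambda_t}\zeta_t$, this leaves
\[
\Gap_{\lambda}(\beta_t,\theta_t) - \mathcal{E}_t(\lambda) = f^*\!\Big(\tfrac{\lambda}{\lambda_t}\zeta_t\Big) - f^*(\zeta_t) + \Big(1 - \tfrac{\lambda}{\lambda_t}\Big)\big[f(\nabla f^*(\zeta_t)) + f^*(\zeta_t)\big].
\]

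Next I would invoke Fenchel's equality. Strong convexity of $f$ makes $f^*$ finite and differentiable, and the supremum defining $f^*(\zeta_t)$ is attained at $\nabla f^*(\zeta_t)$, so $f(\nabla f^*(\zeta_t)) + f^*(\zeta_t) = \langle \zeta_t, \nabla f^*(\zeta_t)\rangle$. Substituting and rewriting $\frac{\lambda}{\lambda_t}\zeta_t - \zeta_t = -(1 - \frac{\lambda}{\lambda_t})\zeta_t$, the right-hand side becomes precisely $L_{f^*}\big(\tfrac{\lambda}{\lambda_t}\zeta_t, \zeta_t\big)$ in the notation of assumption $(A)$, i.e. $f^*(\tfrac{\lambda}{\lambda_t}\zeta_t) - f^*(\zeta_t) - \langle \nabla f^*(\zeta_t), \tfrac{\lambda}{\lambda_t}\zeta_t - \zeta_t\rangle$.

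Finally, since $f$ is $\nu$-smooth and $\mu$-strongly convex, the standard conjugate correspondence gives that $f^*$ is $\frac{1}{\mu}$-smooth and $\frac{1}{\nu}$-strongly convex; applying $(A)$ to $f^*$ at the pair $\big(\frac{\lambda}{\lambda_t}\zeta_t, \zeta_t\big)$ yields
\[
\frac{1}{2\nu}\Big\|\tfrac{\lambda}{\lambda_t}\zeta_t - \zeta_t\Big\|^2 \le L_{f^*}\!\Big(\tfrac{\lambda}{\lambda_t}\zeta_t, \zeta_t\Big) \le \frac{1}{2\mu}\Big\|\tfrac{\lambda}{\lambda_t}\zeta_t - \zeta_t\Big\|^2,
\]
and the identity $\big\|\tfrac{\lambda}{\lambda_t}\zeta_t - \zeta_t\big\|^2 = (1 - \tfrac{\lambda}{\lambda_t})^2 \|\zeta_t\|^2$ turns the two sides into $V_\nu(\lambda_t,\lambda)$ and $V_\mu(\lambda_t,\lambda)$. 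The only step requiring genuine care is the conjugate correspondence between the smoothness/strong-convexity constants of $f$ and those of $f^*$ (and the attendant differentiability of $f^*$ that makes $\nabla f^*(\zeta_t)$ and $\Delta_t$ well defined), which is classical and only needs to be cited; everything else is a chain of algebraic identities that cancel cleanly.
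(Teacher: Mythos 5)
Your proof is correct and follows exactly the route the paper sets up: it starts from the displayed gap-decomposition identity preceding the lemma, reduces $\Gap_{\lambda}(\beta_t,\theta_t)-\mathcal{E}_t(\lambda)$ via Fenchel's equality to the Bregman divergence $L_{f^*}\bigl(\tfrac{\lambda}{\lambda_t}\zeta_t,\zeta_t\bigr)$, and bounds it using the classical fact that $f^*$ is $\tfrac{1}{\mu}$-smooth and $\tfrac{1}{\nu}$-strongly convex (note the paper itself only cites this lemma from \cite{Ndiaye_Le_Fercoq_Salmon_Takeuchi2018} without reproving it). All the algebraic cancellations check out, and the only external ingredient you rely on, the conjugate duality between smoothness and strong convexity, is standard and correctly invoked.
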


The \Cref{eq:warmstart_bound} shows that the sequential optimization errors follow an estimation approximation decomposition. The first term $\mathcal{E}_t(\lambda)$ is dominated by the duality gap $\Gap_t$ (which is an upper bound of the optimization error by weak duality) and the rescaling term $\Delta_t$ accounts for a \textit{gap} in the loss function when one travels from the dual to the primal space. One can show that $\Delta_t \leq \norm{\nabla f(X\beta_t)} \sqrt{2\Gap_t / \mu}$ (see \cite[proof of Lemma 3]{Ndiaye_Le_Fercoq_Salmon_Takeuchi2018}). Hence $\mathcal{E}_t(\lambda)$ converges to zero along with the duality gap when the optimization algorithm converges. By definition, $\mathcal{E}_t(\lambda)$ converges to $\Gap_t$ when $\lambda_t$ converges to $\lambda$. When running an optimization algorithm at time $t$, $\lambda_t$ and $\lambda$ are fixed constant, and the convergence of $\mathcal{E}_t(\lambda)$ to zero is solely governed by $\Gap_t$ and $\Delta_t$.
The second term is a quadratic function of the ratio between consecutive hyperparameter that account for the price to pay when one replace $\lambda_t$ by $\lambda$. It is remarkable that when optimality is reached, the targeted optimization error cannot be reduced beyond the limit $\normin{\hat\zeta_t}^2(1 - \lambda / \lambda_t)^2 / 2\nu$.
Therefore, the approximate continuation path can be designed by carefully balancing these two quantities. Building on this result, we introduce a descent lemma on the target problem between two consecutive regularization parameters and we base our design principle on global convergence rate of the pathwise optimization process. 
%
%
\begin{lemma}[Stepwise progress] \label{lm:approximate_stepwise_progress} 
We suppose that the function $f$ satisfies assumption $(A)$ and that the monotonicity condition $f(X\beta_{t+1}) \leq f(X\beta_t)$ holds. Then
\begin{align*}
\Gap_{\lambda}(\beta_{t+1}, \theta_{t+1}) - \Gap_{\lambda}(\beta_{t}, \theta_{t}) \leq \mathcal{E}_{t+1} - \mathcal{E}_t - \frac{\delta_t \normin{\zeta_t}_{2}^{2}}{2 \nu} \enspace,
\end{align*}
where
$$\delta_t := \left(1-\frac{\lambda}{\lambda_{t}}\right)^2 - \left(\frac{\alpha_{t}\nu}{\lambda_{t}\mu}\right)^2 \left(1-\frac{\lambda}{\lambda_{t+1}}\right)^2 \enspace.$$
\end{lemma}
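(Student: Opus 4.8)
The plan is to invoke the warm-start sandwich of \Cref{eq:warmstart_bound} twice: its upper bound at index $t+1$, namely $\Gap_{\lambda}(\beta_{t+1},\theta_{t+1}) \le \mathcal{E}_{t+1} + V_\mu(\lambda_{t+1},\lambda)$, and its lower bound at index $t$, namely $\Gap_{\lambda}(\beta_{t},\theta_{t}) \ge \mathcal{E}_{t} + V_\nu(\lambda_{t},\lambda)$. Subtracting the second inequality from the first leaves
\begin{align*}
\Gap_{\lambda}(\beta_{t+1},\theta_{t+1}) - \Gap_{\lambda}(\beta_{t},\theta_{t}) \leq \mathcal{E}_{t+1} - \mathcal{E}_t + \frac{\normin{\zeta_{t+1}}^{2}}{2\mu}\left(1-\frac{\lambda}{\lambda_{t+1}}\right)^{2} - \frac{\normin{\zeta_{t}}^{2}}{2\nu}\left(1-\frac{\lambda}{\lambda_{t}}\right)^{2} \enspace.
\end{align*}
Everything then reduces to bounding the ratio $\normin{\zeta_{t+1}}^{2}/\normin{\zeta_{t}}^{2}$ by $(\nu/\mu)(\alpha_t/\lambda_t)^2$, after which the last two terms should collapse to $-\delta_t\normin{\zeta_t}^2/(2\nu)$.

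To establish that ratio bound I would chain five elementary facts. First, the dual rescaling $\theta_{t+1} = -\nabla f(X\beta_{t+1})/\alpha_{t+1}$ together with $\alpha_{t+1} \ge \lambda_{t+1}$ gives $\normin{\zeta_{t+1}} = (\lambda_{t+1}/\alpha_{t+1})\normin{\nabla f(X\beta_{t+1})} \le \normin{\nabla f(X\beta_{t+1})}$. Second, $\nu$-smoothness of $f$ evaluated at $X\beta_{t+1} - \nabla f(X\beta_{t+1})/\nu$ combined with the normalization $\inf f = 0$ yields $\normin{\nabla f(X\beta_{t+1})}^{2} \le 2\nu f(X\beta_{t+1})$. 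Third, the monotonicity hypothesis gives $f(X\beta_{t+1}) \le f(X\beta_t)$. Fourth, minimizing the $\mu$-strongly-convex lower model of $f$ anchored at $X\beta_t$ and using again $\inf f = 0$ gives $f(X\beta_t) \le \normin{\nabla f(X\beta_t)}^2/(2\mu)$. Fifth, $\normin{\nabla f(X\beta_t)} = (\alpha_t/\lambda_t)\normin{\zeta_t}$ by definition of $\zeta_t = -\lambda_t\theta_t$. Composing these gives $\normin{\zeta_{t+1}}^2 \le (\nu/\mu)(\alpha_t/\lambda_t)^2\normin{\zeta_t}^2$.

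Substituting this into the displayed inequality and pulling $\normin{\zeta_t}^2/(2\nu)$ out of the two remaining quadratic-in-ratio terms leaves precisely $-\tfrac{1}{2\nu}\normin{\zeta_t}^2\big[(1-\lambda/\lambda_t)^2 - (\alpha_t\nu/(\lambda_t\mu))^2(1-\lambda/\lambda_{t+1})^2\big]$, which equals $-\delta_t\normin{\zeta_t}^2/(2\nu)$, completing the argument. I expect the only genuinely delicate point to be the asymmetric use of the two regularity constants in the middle of the chain: smoothness must be used at index $t+1$ to convert a gradient norm into a function value, while strong convexity must be used at index $t$ to convert a function value back into a gradient norm, and it is exactly this $\nu$-then-$\mu$ bookkeeping, together with the $\inf f = 0$ normalization, that produces the factor $\nu/\mu$ and hence the specific form of $\delta_t$. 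The concluding regrouping is purely algebraic once the ratio bound is in hand.
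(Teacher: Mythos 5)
Your proposal is correct and follows essentially the same route as the paper: the two-sided application of \Cref{eq:warmstart_bound} (upper bound at $t+1$, lower bound at $t$), followed by the chain $\normin{\zeta_{t+1}}^2 \le \normin{\nabla f(X\beta_{t+1})}^2 \le 2\nu f(X\beta_{t+1}) \le 2\nu f(X\beta_t) \le \tfrac{\nu}{\mu}\normin{\nabla f(X\beta_t)}^2 = \tfrac{\nu}{\mu}(\alpha_t/\lambda_t)^2\normin{\zeta_t}^2$, is exactly the paper's argument, including the $\inf f = 0$ normalization and the $\alpha_{t+1}\ge\lambda_{t+1}$ step. The concluding factorization matches as well; no gaps.
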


The \Cref{lm:approximate_stepwise_progress} suggests that, at two consecutive steps, if the optimization errors are small enough and the parameter ratio are sufficiently decreasing, then the target duality gap will also decrease. Moreover, it provides an explicit way to design a grid. Sequentially, given $\lambda_{t}$, we can simply choose $\lambda_{t+1}$ to ensure a linear rate of convergence. The latter specifies policy for decreasing the step size parameter $\delta_t$ and the optimization error terms $\mathcal{E}_{t+1} - \mathcal{E}_t$ which characterizes the stopping condition. The \Cref{prop:fast_path} explicitly provides a range of parameters for the adaptive grid and a stopping criterion such that linear convergence is guaranteed.

\begin{proposition}[Linear convergence]\label{prop:fast_path}
Under the assumption of \Cref{lm:approximate_stepwise_progress}, let $r$ in $(0, \frac{\mu}{\nu})$ and let us sequentially define
\begin{align*}
& \lambda_{t+1} = \frac{\lambda}{1 - \sqrt{D(\epsilon_t)}} \text{ and } \mathcal{E}_{t+1} \leq (1 - r) \mathcal{E}_t + \epsilon_t \enspace,
\end{align*}
where $\epsilon_t$ is such that $D(\epsilon_t) \geq 0$ with
$$D(\epsilon) =: \left(\frac{\lambda_{t}\mu}{\alpha_{t}\nu}\right)^2 \left[ \left(1 - r \frac{\nu}{\mu}\right) \left(1 - \frac{\lambda}{\lambda_{t}} \right)^2 -
\frac{2 \nu \epsilon}{\norm{\zeta_t}^2}\right]\enspace.$$ 

Then, we have
\begin{equation*}
\Gap_{\lambda}(\beta_T, \theta_T) \leq (1 - r)^T \Gap_{\lambda}(\beta_0, \theta_0) \enspace.
\end{equation*}
\end{proposition}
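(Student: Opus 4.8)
The plan is to reduce the statement to a single-step geometric contraction, namely $\Gap_{\lambda}(\beta_{t+1},\theta_{t+1}) \leq (1-r)\,\Gap_{\lambda}(\beta_{t},\theta_{t})$ for every $t = 0,\dots,T-1$, and then chain it: iterating this inequality $T$ times immediately yields $\Gap_{\lambda}(\beta_T,\theta_T) \leq (1-r)^{T}\Gap_{\lambda}(\beta_0,\theta_0)$. Since $r < \mu/\nu \leq 1$ we have $1-r>0$, so the chaining is harmless. Everything therefore comes down to proving the one-step bound.

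To obtain it, write $g_t := \Gap_{\lambda}(\beta_t,\theta_t)$ and start from \Cref{lm:approximate_stepwise_progress}, which gives $g_{t+1} - g_t \leq \mathcal{E}_{t+1} - \mathcal{E}_t - \delta_t\norm{\zeta_t}^{2}/(2\nu)$. Hence $g_{t+1}\leq(1-r)g_t$ follows as soon as $\mathcal{E}_{t+1} - \mathcal{E}_t - \delta_t\norm{\zeta_t}^{2}/(2\nu) \leq -r\,g_t$. The decisive move is to bound the right-hand side using the \emph{upper} half of the warm-start estimate in \Cref{eq:warmstart_bound}, $g_t \leq \mathcal{E}_t + V_\mu(\lambda_t,\lambda) = \mathcal{E}_t + \tfrac{1}{2\mu}\norm{\zeta_t}^{2}(1-\lambda/\lambda_t)^{2}$, which reduces the target to the purely algebraic sufficient condition
\[
\mathcal{E}_{t+1} - (1-r)\mathcal{E}_t \;\leq\; \frac{\norm{\zeta_t}^{2}}{2\nu}\left[ \delta_t - \frac{r\nu}{\mu}\left(1-\frac{\lambda}{\lambda_t}\right)^{2} \right] .
\]

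Now I would simply unfold the definitions. Expanding $\delta_t$ and using that the prescribed grid point satisfies $1-\lambda/\lambda_{t+1} = \sqrt{D(\epsilon_t)}$, the bracket becomes $(1-r\nu/\mu)(1-\lambda/\lambda_t)^{2} - (\alpha_t\nu/\lambda_t\mu)^{2}D(\epsilon_t)$; substituting the definition of $D(\epsilon_t)$, the factor $(\alpha_t\nu/\lambda_t\mu)^{2}$ cancels and the whole right-hand side collapses to exactly $\epsilon_t$. At that point the sufficient condition is precisely the prescribed early-stopping rule $\mathcal{E}_{t+1}\leq(1-r)\mathcal{E}_t+\epsilon_t$, so the one-step contraction holds, and an induction on $t$ finishes the proof.

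What remains are well-posedness checks rather than genuine difficulties. Since $\alpha_t = \max(\lambda_t,\norm{X^\top\nabla f(X\beta_t)}_\infty) \geq \lambda_t$ and $\mu\leq\nu$, one has $(\lambda_t\mu/\alpha_t\nu)^{2}\leq 1$, and together with $r\in(0,\mu/\nu)$ and $0<\lambda/\lambda_t<1$ this forces $0 \leq D(\epsilon_t) \leq (1-r\nu/\mu)(1-\lambda/\lambda_t)^{2} < 1$; hence $\lambda_{t+1}$ is finite with $\lambda < \lambda_{t+1} < \lambda_t$, so the generated sequence is a genuine decreasing grid staying above $\lambda$, and the boundary case $D(\epsilon_{T-1})=0$ is exactly what makes $\lambda_T=\lambda$ (the same computation applies there, the needed inequality reducing to $\nu\geq\mu$). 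The step I would flag as the crux — though it is easy once seen — is choosing the $V_\mu$ (upper) side of \Cref{eq:warmstart_bound} to control $g_t$; after that, the definition of $D(\epsilon_t)$ has clearly been reverse-engineered so that the algebra telescopes, and no real obstacle is expected.
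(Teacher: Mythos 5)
Your proposal is correct and follows essentially the same route as the paper's proof: invoke \Cref{lm:approximate_stepwise_progress} for the one-step difference, control $-r\,\Gap_\lambda(\beta_t,\theta_t)$ via the upper bound $\Gap_\lambda(\beta_t,\theta_t)\leq \mathcal{E}_t+\tfrac{1}{2\mu}\norm{\zeta_t}^2(1-\lambda/\lambda_t)^2$ from \Cref{eq:warmstart_bound}, and observe that the definition of $D(\epsilon_t)$ makes the residual algebra collapse to exactly $\epsilon_t$, so the stopping rule $\mathcal{E}_{t+1}\leq(1-r)\mathcal{E}_t+\epsilon_t$ gives the contraction $\Gap_\lambda(\beta_{t+1},\theta_{t+1})\leq(1-r)\Gap_\lambda(\beta_t,\theta_t)$, which is then chained. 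The only cosmetic difference is that you derive the sufficient condition forward while the paper starts from the hypothesis and works backward; your added well-posedness remarks ($\lambda\leq\lambda_{t+1}<\lambda_t$) are sound and not in the paper.
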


This result explains how to choose a sequence of decreasing regularization parameter and optimization stopping criterion to guarantee a desired amount of progress toward convergence. It is solely based on exploiting the regularity of the loss function and does not involve any oracle statistical information that are typically unknown. It holds for any convex regularization function $\Omega$. \\

From \Cref{prop:fast_path}, we can deduce that, if an $\epsilon$-solution is needed at parameter $\lambda$, then it is sufficient to use a geometric grid with
\begin{equation}\label{eq:max_size}
T \leq \frac{ \log\left({\epsilon}/{\Gap_{\lambda}(\beta_0, \theta_0)}\right) }{\log(1 - r)} \enspace.
\end{equation}
This convergence rate reveals that the default parameter of popular solvers ($T=100$) can be unnecessarily large. Similarly, the intermediate optimization error can be unnecessarily small when fixed to the default tolerance $\epsilon$.\\

In our experiences, we observed that $\lambda_t = \lambda$ can be reached exactly which correspond to $D(\epsilon_{t-1}) = 0$. However, it is important to note that the algorithm can output a valid $\epsilon$-solution before reaching $\lambda_t = \lambda$ \ie will stop after a finite number of iterations \Cref{eq:max_size}. In this case, either $\lambda_t = \lambda$ at the last iteration, or an $\epsilon$-solution was found at a $\lambda_t < \lambda$ and the path itself is early stopped. \\

The choice of $\epsilon_t$ is guided by the condition $D(\epsilon_t) \geq 0$ which, for example, trivially holds when one selects $\epsilon_t = 0.42 \times \frac{\norm{\zeta_{t}}^{2}}{2\nu} \left(1 - r \frac{\nu}{\mu} \right) \left(1 - \frac{\lambda}{\lambda_t}\right)^2$. Also the choice of $r = 0.42\frac{\mu}{\nu}$ satisfies the condition $r \in (0, \mu/\nu)$ and the linear convergence is fully guaranteed. Although \Cref{prop:fast_path}, allows us to understand the impact of the choice of such parameters on the convergence speed, the consequences on the wall-clock computation time are not so clear.\\

All the quantities $\mathcal{E}_{t+1}, \mathcal{E}_{t}$ and $\epsilon_t$ displayed in the stopping condition are easily computable and does not add significant cost compared to the classical duality gap evaluation.

\begin{remark}[Prescribed Grid]
Often, a grid of parameters $(\lambda_{t}^{\rm{grid}})_t$ is provided in advance \eg to do cross-validation. In this case, one can use our method by sequentially initializing with $\lambda_0 = \lambda_{t}^{\rm{grid}}$ and $\lambda = \lambda_{t+1}^{\rm{grid}}$ consecutive elements of the given grid. Intermediate points will be added if $\lambda_0$ and $\lambda$ are not close enough. Otherwise, no additional points will be added and only the early stopping rule will be beneficial. We remind that, the target duality gap cannot be reduced with the intermediate grid point after some iterations and must be early stopped. The advantage of our approach here is therefore that it avoids additional calculations that are clearly unnecessary. Another advantage is more noticeable when the regularization induces parsimony, in which case the addition of points in the grid adds flexibility in controlling the size of the supports of optimal solutions; see \Cref{sec:Iterative_Sparse_Optimization}.
\end{remark}

\subsection{Simplified Policies} 
A rigorous application of \Cref{prop:fast_path} may lead to a conservative strategy that is not necessarily efficient or practical in some situations \eg the assumption $(A)$ may not hold and so the conditioning is unknown or the strongly convex constant $\mu$ very small if not equal to zero. For instance, with the logistic regression ($\mu=0$), one need to rely on generalized self-concordance property in order to properly bound the optimization error. However, our analysis provides guidelines for scaling the optimization errors and hyperparameter decrease. Therefore, one can \textit{heuristicaly} use the following simplified policies: set $\lambda_{t+1}$ s.t.
\begin{align*}
&\left(1 - \frac{\lambda}{\lambda_{t+1}} \right)^2 = \left(1 - r\right) \left(1 - \frac{\lambda}{\lambda_{t}} \right)^2 \enspace,\\
%
&\Gap_{t+1} \leq \epsilon_{t+1} := \frac{\lambda_{t+1}}{\lambda} \epsilon \enspace.
\end{align*}
The strategy is to simplify the criterion in \Cref{prop:fast_path} by \textit{arbitrarily} dropping some \textit{nuisance} parameters, which also simplifies the problem.
\begin{itemize}
\item The rescaling term $\Delta_t$ is equal to zero as soon as $\alpha_t = \lambda_t$. This holds whenever the regularization function $\Omega$ is bounded or strongly convex. In these cases, the optimization error term simplifies to $\mathcal{E}_t = \frac{\lambda}{\lambda_t} \Gap_t$. Our analysis suggests a sequential stopping criterion that upper bounds the optimization error by a combination between the previous error and a tolerance $\epsilon_t$ chosen to be roughly smaller than the norm of the residual squared. By setting $\epsilon_t = r\epsilon$, this ensures that $\mathcal{E}_{t+1} \leq \epsilon$ at any time step; from which we deduce the stopping rule above that re-scale the duality gap with respect to the ratio between the current $\lambda_{t+1}$ and the target. Again, far from the target, a high optimization error is not needed for reducing the target duality gap; which is fairly intuitive.

\item Considering the traditional arbitrary geometric grid, one can still use the upper bound in \Cref{eq:max_size} to adaptively design the size of the grid $T_{\max} \propto \log(\epsilon/\Gap_{\lambda}(\beta_0, \theta_0))$. The latter is thus readily calibrated as a function of the quality of the initialization and the target tolerance.

\item One can adaptively choose $r$ as the fraction of the ratio between the target and the current point \ie $r = r_{t+1} \propto \frac{\lambda}{\lambda_t}$ ensuring an increasing sequence of $r_t$ \eg $r_t = \frac{\mu}{\nu} \frac{\lambda}{\lambda_t}$ is a feasible choice. Also note that $\lambda$ and $\epsilon$ are chosen by the algorithm user. When, for some reason, these quantities are too small, one can face numerical issues with overly conservative rules. To alleviate this issue while maintaining sufficient decrease in the approximation path strategy, the scalars $\lambda$ and $\epsilon$ can be respectively replaced by a clipped version \eg $\max(\lambda, \lambda_0 / 10^3)$ and $\max(\epsilon, f(0) / 10^{8})$ for numerical stability.
\end{itemize}

The proposed design of the path with fast global convergence rate holds with any convex regularization $\Omega$ and any converging algorithm for solving the subproblems. Let alone, this might \textit{not} provide a wall clock speed-up of specific optimization solver without exploiting further structures. It rather serves as a generic homotopy continuation policies guided by global convergence analysis and for example can be readily used in interior point methods or sequential smoothing methods. In the following, we apply our analysis in sparse optimization problem as Lasso where one needs to control the computational complexity by limiting the number of active variables which is related to the regularizer $\lambda$.

%
\begin{algorithm}[!t]
\caption{Active Approximate Continuation Path for Sparse Optimization}
\label{alg:active_fast_path}
\begin{algorithmic}
{\small\STATE {\bfseries Input:} Data: $D = X, y$ \qquad Parameters: $\lambda > 0$ \qquad tolerance $\epsilon > 0$
\STATE Initialization: $\beta_0 = 0 \in \bbR^p$ and $\lambda_0 = \norm{X^\top \nabla f(0)}_{\infty}$.
\REPEAT
\STATE \# \textit{Define subproblem}
\STATE $ \lambda_{t+1} = \frac{\lambda}{1 - \sqrt{D(\epsilon_t)}}$ where 
$D(\cdot)$ and $\epsilon_t$ are defined in \Cref{prop:fast_path}.
\STATE
\STATE \# \textit{Solve subproblem with any optional unsafe screening heuristic}
\STATE $\mathcal{W}_{t}(\lambda_t) = \{j \in [p]:\, |X_{j}^{\top} \nabla f(X\beta_{t-1})| \geq \lambda_{t}\}$
\STATE Find $\beta^{(t)} \in \bbR^{p}$ such that $\Gap_{\lambda_{t}}(\beta_{\mathcal{W}_{t}}^{(t)}, \theta^{(t)}) \leq \epsilon_{t}$ \hspace{1cm} \COMMENT{initialized with $\beta_{t-1}$}
\STATE
\STATE \# \textit{Post-processing correction with safe screening rule}
\STATE Find $\beta_{t} \in \bbR^p$ such that $\Gap_{\lambda_{t}}(\beta_{t}, \theta_{t}) \leq \epsilon_{t}$ \hspace{2cm} \COMMENT{initialized with $\beta^{(t)}$}
\STATE Set $(\beta, \theta) = (\beta_{t}, \theta_{t})$
\UNTIL{$\lambda_{t} = \lambda$ or $\Gap_{\lambda}(\beta, \theta) \leq \epsilon$.}
\STATE {\bfseries Return:} $(\beta, \theta)$}
\end{algorithmic}
\end{algorithm}

\begin{figure*}[t!]
  \centering
  \subfigure{\includegraphics[width=\columnwidth]{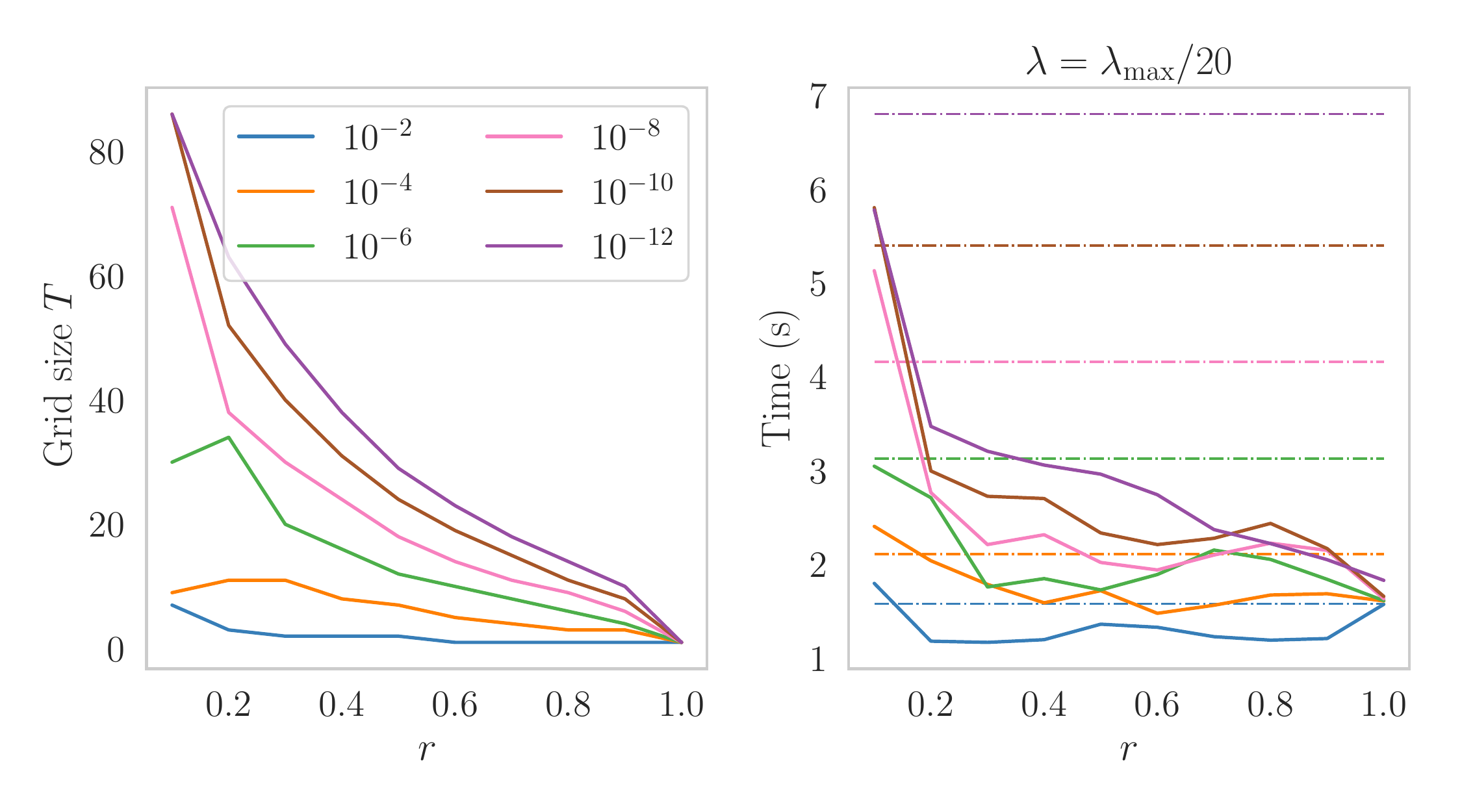}}
  \subfigure{\includegraphics[width=\columnwidth]{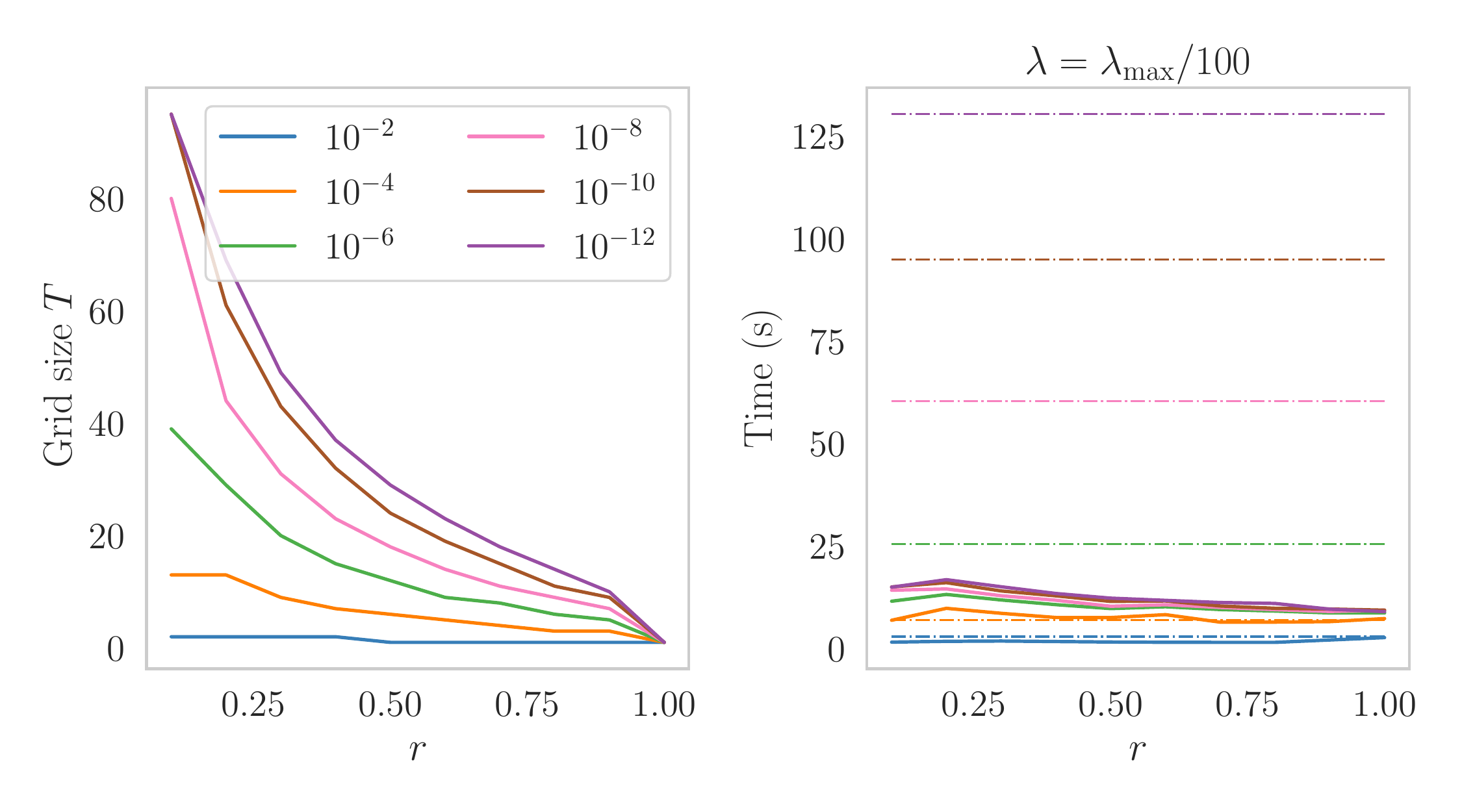}}
  \caption{Illustration of the performances of the proposed approximate continuation path on Lasso for several tolerance error. We use the \texttt{Climate} dataset with $n = 814$ observations and $p = 73577$ features. The optimization problems are solved with the cyclic coordinate descent (CD) with Gap Safe screening Rule for the plain curves and CD without pathwise screening for the dashed line.}
\end{figure*}

\section{Iterative Sparse Optimization}
\label{sec:Iterative_Sparse_Optimization}

We analyze the effect of pathwise optimization on parsimonious problem like Lasso. In the latter, we expect the optimal solution to be less sparse when the regularization parameter is small. We will precisely quantify this fact and derive a regularization path strategy that controls the sparsity level. To simplify the discussion, we limit ourselves to $\ell_1$ regularization but the extensions to more general sparsity inducing regularizer should not be difficult. We refer to \cite{Ndiaye_Fercoq_Salmon20} for the generalization of screening rule for generic separable regularization function $\Omega$.
Let us recall the basic principles of (safe) screening rules introduced in \citep{ElGhaoui_Viallon_Rabbani12} for $\ell_1$ norm and how the duality gap comes into play for eliminating non active variables without false exclusions. Then, we study how its variations \wrt the regularization parameter provide insights on the variations of the active set.

\begin{proposition}[Gap safe rule \cite{Fercoq_Gramfort_Salmon15, Jonhson_Guestrin15}]
\label{prop:gap_safe_screening}
If $f$ is $\nu$-smooth, for any feasible pair $(\beta, \theta) \in \dom P_{\lambda} \times \dom D_{\lambda}$, we have:
\begin{equation*}
\normin{\ttheta{\lambda} - \theta} \leq \sqrt{\frac{2 \nu}{\lambda^{2}} \Gap_{\lambda}(\beta, \theta)} =: r(\lambda) \enspace.
\end{equation*}
Moreover, we have the Gap Safe screening Rule (\texttt{GSR}):
\begin{equation*}
d_{j}(\theta) := \frac{1 - |X_{j}^{\top} \theta|}{\norm{X_{j}}} > r(\lambda) \Longrightarrow \tbeta{\lambda}_j = 0, \qquad \forall j \in [p] \enspace.
\end{equation*}
\end{proposition}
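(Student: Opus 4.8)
The plan is to prove the two assertions in turn: first the geometric \emph{dual ball} bound $\normin{\ttheta{\lambda} - \theta} \le r(\lambda)$, and then the screening implication, which follows almost immediately once the optimal dual point is known to lie in $B(\theta, r(\lambda))$.

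For the ball bound I would exploit the conjugate duality between smoothness and strong convexity. Since $f$ is convex and $\nu$-smooth, its Fenchel conjugate $f^*$ is $\tfrac1\nu$-strongly convex; precomposing with the linear map $\theta \mapsto -\lambda\theta$ then makes $\theta \mapsto f^*(-\lambda\theta)$ strongly convex with modulus $\tfrac{\lambda^2}{\nu}$, so $\theta \mapsto -f^*(-\lambda\theta)$ is $\tfrac{\lambda^2}{\nu}$-strongly concave, and adding the concave term $-\lambda\Omega^*(X^\top\theta)$ leaves $D_\lambda$ $\tfrac{\lambda^2}{\nu}$-strongly concave. Evaluating the strong-concavity inequality at the maximizer $\ttheta{\lambda}$ (using that $0$ is a supergradient of $D_\lambda$ there, or equivalently a limiting argument along the segment $(1-t)\ttheta{\lambda}+t\theta$ to avoid any differentiability assumption on $D_\lambda$) gives $\tfrac{\lambda^2}{2\nu}\normin{\ttheta{\lambda}-\theta}^2 \le D_\lambda(\ttheta{\lambda}) - D_\lambda(\theta)$. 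I then bound the right-hand side by the gap: strong duality yields $D_\lambda(\ttheta{\lambda}) = P_\lambda(\tbeta{\lambda}) \le P_\lambda(\beta)$, hence $D_\lambda(\ttheta{\lambda}) - D_\lambda(\theta) \le P_\lambda(\beta) - D_\lambda(\theta) = \Gap_\lambda(\beta,\theta)$. Rearranging produces exactly $\normin{\ttheta{\lambda} - \theta} \le \sqrt{2\nu\,\Gap_\lambda(\beta,\theta)/\lambda^2} = r(\lambda)$.

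For the screening rule I would use the Fermat/KKT conditions of \Cref{eq:primal} with $\Omega = \norm{\cdot}_1$: at optimality $\ttheta{\lambda} = -\nabla f(X\tbeta{\lambda})/\lambda$ and $X^\top\ttheta{\lambda} \in \partial\norm{\tbeta{\lambda}}_1$. Reading this coordinatewise through the subdifferential of $|\cdot|$, a nonzero coordinate $\tbeta{\lambda}_j \neq 0$ forces $X_j^\top\ttheta{\lambda} = \mathrm{sign}(\tbeta{\lambda}_j)$, i.e. $|X_j^\top\ttheta{\lambda}| = 1$. On the other hand, since $\ttheta{\lambda} \in B(\theta, r(\lambda))$ by the first part, Cauchy--Schwarz gives $|X_j^\top\ttheta{\lambda}| \le |X_j^\top\theta| + \norm{X_j}\,r(\lambda)$. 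Therefore $d_j(\theta) = \tfrac{1 - |X_j^\top\theta|}{\norm{X_j}} > r(\lambda)$, i.e. $|X_j^\top\theta| + \norm{X_j}\,r(\lambda) < 1$, is incompatible with $|X_j^\top\ttheta{\lambda}| = 1$, so it must be that $\tbeta{\lambda}_j = 0$.

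I expect the only delicate point to be the first step: one needs strong duality to hold (so that $D_\lambda(\ttheta{\lambda}) = P_\lambda(\tbeta{\lambda})$ and the dual maximizer is attained), and one must apply the strong-concavity bound at a maximizer of a function that is in general non-differentiable, which is handled cleanly by the supergradient/segment argument rather than a naive gradient inequality. The conjugate-duality fact (smooth $\Rightarrow$ strongly convex conjugate) and the closure of the argument by Cauchy--Schwarz and the $\ell_1$ subdifferential are routine. A harmless edge case is $|X_j^\top\theta| \ge 1$, where $d_j(\theta) \le 0 < r(\lambda)$ never holds and the implication is vacuously true.
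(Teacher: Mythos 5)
Your proof is correct and is the standard argument for this result; note that the paper does not prove \Cref{prop:gap_safe_screening} itself but cites it from \cite{Fercoq_Gramfort_Salmon15, Jonhson_Guestrin15}, where essentially your argument (strong concavity of the dual from $\nu$-smoothness of $f$, then the KKT condition $X^\top\ttheta{\lambda}\in\partial\normin{\tbeta{\lambda}}_1$ combined with Cauchy--Schwarz) appears. One small simplification: you do not need strong duality in the first step, since weak duality already gives $D_\lambda(\ttheta{\lambda})\leq P_\lambda(\tbeta{\lambda})\leq P_\lambda(\beta)$ and hence $D_\lambda(\ttheta{\lambda})-D_\lambda(\theta)\leq \Gap_\lambda(\beta,\theta)$.
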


Geometrically, $d_j(\theta)$ is the euclidean distance between $\theta$ and the boundary of the $j$-th constraint in the dual space $\{z \in \bbR^n:\, |X_{j}^{\top}z| \leq 1\}$.
The \Cref{prop:gap_safe_screening} shows that the radius $r$ can be chosen as a function of the duality gap which represents a computable estimation of the optimization error.
To reduce the computational complexity, one can restrict the problem to the active set of variables 
\begin{align*}
\{j \in [p],\, d_j(\theta) \leq r(\lambda) \} &\supset \{j \in [p],\, d_j(\ttheta{\lambda}) = 0 \} 
=: \hat{\mathcal{A}}^{(\lambda)} \enspace.
\end{align*}
Clearly, the ability of such rule to efficiently eliminate many variables depends on how small is the duality gap. In other world, it strongly depends on the quality of the approximate solution $\beta$. \emph{Dynamic screening} rule \citep{Bonnefoy_Emiya_Ralaivola_Gribonval14, Fercoq_Gramfort_Salmon15, Ndiaye_Fercoq_Gramfort_Salmon17} can increase the number of variable eliminated as the algorithm progresses towards the optimal solution which iteratively reduces the approximation error. Still, the latter should be small enough to get an efficient method. In the meantime, the algorithm is running on a large set of features including irrelevant ones. Note that, such safe strategies have to be conservative enough to avoid false exclusion of a non zero coefficient at any optimal solution. There are contexts where solutions to a multitude of hyperparameters chosen on a grid are required \eg cross validation. In that case, a solution $\tbeta{\lambda_{t+1}}$ can be used as an initial estimate of $\tbeta{\lambda_{t}}$ and small approximation error can be expected if the consecutive regularization parameters are close enough. In this setting, at time $t+1$, the information available is the approximate primal/dual solutions at time $t$ \ie $\beta = \beta_t$, $\theta = \theta_t$. We then use the sequential safe radius 
\begin{equation}
r_t(\lambda_{t+1}) = \sqrt{\frac{2 \nu}{\lambda_{t+1}^{2}} \Gap_{\lambda_{t+1}}(\beta_t, \theta_t)}\enspace,
\end{equation}
and the corresponding sequential safe active set:
\begin{equation}
\mathcal{A}_t(\gamma) = \{j \in [p],\, d_j(\theta_t) \leq r_t(\gamma)\} \text{ where } \gamma > 0.
\end{equation}
In the sequential setting, one should pay specific attentions to the path of active sets $\{\mathcal{A}_t(\lambda_{t+1})\}_{t \in [T]}$. Two consecutive safe active sets can be very distinct if their corresponding regularization parameters are far away from each other. To control the differences, the important quantity here is the distance between the dual solutions $\theta_t$ and $\hat\theta_{t+1}$.
From the triangle inequality and \Cref{prop:gap_safe_screening}, we have $\forall j \in [p]$:
\begin{align*}
\left| |X_{j}^{\top} \hat\theta_{t+1}| - |X_{j}^{\top} \theta_{t}| \right| &\leq \norm{X_{j}} \normin{\hat\theta_{t+1} - \theta_t} \\
&\leq \norm{X_{j}} r_t(\lambda_{t+1}) \enspace.
%
\end{align*}
Since the regularization parameter determines the level of sparsity in presence of sparsity inducing regularizer, solving a sequence of optimization problems using a decreasing sequence of hyperparameter resembles active set methods. We will make this explicit in the following.
From the upper bound on the duality gap \Cref{eq:warmstart_bound}, we obtain the following result that explicitly relates the screening abilities to the distance between two consecutive regularization parameters.
\begin{lemma}\label{lm:support_path}
If $f$ satisfies assumption $(A)$, we have
$$d_j(\theta_t) > \sqrt{ \frac{2\nu}{\lambda_{t+1}^{2}} \mathcal{E}_t(\lambda_{t+1})  +  \frac{\nu}{\mu} \norm{\zeta_t}^2 \left( \frac{1}{\lambda_{t+1}} - \frac{1}{\lambda_{t}}  \right)^2 }$$
implies $\tbeta{\lambda_{t+1}}_j = 0$.
\end{lemma}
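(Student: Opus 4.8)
The plan is to obtain \Cref{lm:support_path} by composing two results already available: the Gap Safe screening Rule of \Cref{prop:gap_safe_screening} and the upper half of the warm-start sandwich in \Cref{eq:warmstart_bound}. First I would apply \Cref{prop:gap_safe_screening} at the regularization level $\lambda_{t+1}$ using the feasible pair $(\beta_t, \theta_t)$ --- feasibility holds because $\beta_t \in \bbR^p = \dom P_{\lambda_{t+1}}$ and $\theta_t$, being a rescaled residual with $\normin{X^\top \theta_t}_{\infty} \leq 1$, lies in $\dom D_{\gamma}$ for every $\gamma > 0$. This gives the implication $d_j(\theta_t) > r_t(\lambda_{t+1}) \Rightarrow \tbeta{\lambda_{t+1}}_j = 0$, where $r_t(\lambda_{t+1}) = \sqrt{2\nu\, \Gap_{\lambda_{t+1}}(\beta_t, \theta_t)/\lambda_{t+1}^2}$. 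Hence it suffices to show that the quantity under the square root in the statement dominates $r_t(\lambda_{t+1})^2$, since then $d_j(\theta_t)$ exceeding the larger radius still forces the screening conclusion.

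The second step is to bound $\Gap_{\lambda_{t+1}}(\beta_t, \theta_t)$ via \Cref{eq:warmstart_bound} with $\lambda = \lambda_{t+1}$, which yields $\Gap_{\lambda_{t+1}}(\beta_t, \theta_t) \leq \mathcal{E}_t(\lambda_{t+1}) + V_\mu(\lambda_t, \lambda_{t+1})$ with $V_\mu(\lambda_t, \lambda_{t+1}) = \frac{1}{2\mu}\norm{\zeta_t}^2 (1 - \lambda_{t+1}/\lambda_t)^2$. Multiplying through by $2\nu/\lambda_{t+1}^2$ and using the elementary identity $\frac{1}{\lambda_{t+1}^2}(1 - \lambda_{t+1}/\lambda_t)^2 = (1/\lambda_{t+1} - 1/\lambda_t)^2$ produces exactly the expression inside the square root in \Cref{lm:support_path}. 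Combined with the implication from the first step, this closes the argument.

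There is no genuine obstacle here; the lemma is a direct consequence of \Cref{prop:gap_safe_screening} and \Cref{eq:warmstart_bound}. The only points needing mild care are verifying the feasibility of $(\beta_t, \theta_t)$ at level $\lambda_{t+1}$ so that the safe rule applies, and checking the direction of the inequalities, so that replacing $r_t(\lambda_{t+1})$ by the larger, more explicit radius preserves the screening implication rather than weakening it.
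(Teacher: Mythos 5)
Your proposal is correct and follows exactly the route the paper intends: combine the Gap Safe Rule of \Cref{prop:gap_safe_screening} applied at $\lambda_{t+1}$ with the pair $(\beta_t,\theta_t)$, and bound $r_t^2(\lambda_{t+1})$ using the upper half of \Cref{eq:warmstart_bound}, which is precisely the inequality $r_t^2(\lambda) - \frac{2\nu}{\lambda^2}\mathcal{E}_t(\lambda) \leq \frac{\nu}{\mu}\norm{\zeta_t}^2\left(\frac{1}{\lambda}-\frac{1}{\lambda_t}\right)^2$ that the paper records at the start of its proof of \Cref{prop:path_constraint_size}. The algebraic identity and the feasibility check you flag are both handled correctly.
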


Then if a coordinate $j$ is eliminated at time $t$ \ie $j \not\in \mathcal{A}^{(\lambda_t)}$, it will also be eliminated at time $t+1$ if $\lambda_{t+1}$ is sufficiently close to $\lambda_t$ and the optimization error term $\mathcal{E}_t(\lambda_{t+1})$ is sufficiently small. Decreasing the regularization parameter will increase the number of safe active variables and so the computational complexity. Beside, an explicit constraint on the optimization error also appears. We must ensure that
$$ \frac{2\nu}{\lambda_{t+1}^{2}} \mathcal{E}_t(\lambda_{t+1}) < d_{j}^{2}(\theta_t),\qquad \forall j \notin \mathcal{A}_t(\lambda_t) \enspace.$$ 
Since $\lambda \leq \lambda_{t+1} \leq \lambda_t$, we have a lower and upper bound of the optimization error term, that is independent of $\lambda_{t+1}$: 
$\mathcal{E}'_{t} \leq \frac{1}{\lambda^{2}} \mathcal{E}_t(\lambda_{t+1}) \leq \mathcal{E}_{t}^{"}$, where
\begin{align*}
\mathcal{E}'_{t} &= \frac{1}{\lambda_{t}^{2}} \Gap_t + \left(\frac{1}{\lambda^{2}} - \frac{1}{\lambda_{t}^{2}} \right) \min(0, \Delta_t) \enspace,\\
\mathcal{E}_{t}^{"} &= \frac{1}{\lambda \lambda_t} \Gap_t + \left(\frac{1}{\lambda^{2}} - \frac{1}{\lambda_{t}^{2}} \right) \max(0, \Delta_t) \enspace.
\end{align*}
The above optimization error constraint is satisfied as soon as the following stopping criterion holds
\begin{equation}
\mathcal{E}_{t}^{"}  < \min_{j \in [p] \backslash \mathcal{A}_t(\lambda_t)} \frac{1}{2 \nu} d_{j}^{2}(\theta_t) \enspace.
\end{equation}

Also, from \Cref{lm:support_path}, one can note that the sequential screening does not improve after the optimization error becomes proportional to the norm of the residual squared. Then, the algorithm can be stopped when
$$\mathcal{E}_t(\lambda_{t+1}) \propto  \frac{1}{2\mu} \norm{\zeta_t}^2 \left(1 - \frac{\lambda_{t+1}}{\lambda_{t}}\right)^2 \enspace.$$

\paragraph{Trade-off between sparsity and fast rate.}
For simplicity, let us consider that we use exact solution at each step. With the proposed sequential step size in \Cref{prop:fast_path} which guarantee linear convergence, and the sequential safe active set control in \Cref{lm:support_path}, we have a correction term proportional to
\begin{equation*}
\sqrt{\frac{\nu}{\mu}} \left( \frac{1}{\lambda_{t+1}} - \frac{1}{\lambda_{t}} \right) = \sqrt{\frac{\mu}{\nu}} \left(\frac{\nu}{\mu} - \sqrt{1 - r \frac{\nu}{\mu}} \right) \left( \frac{1}{\lambda} - \frac{1}{\lambda_{t}} \right) \enspace.
\end{equation*}
So we want $r$ to be as small as possible to reduce the active set error while the global rate of convergence requires $r$ to be as large as possible. In general, we could not confirm an optimal choice of $r$ and this will be considered as a problem dependent hyperparameter. In practice, we simply suggest the previously discussed adaptive choice $r = r_t \propto \frac{\mu}{\nu} \frac{\lambda}{\lambda_t}$. The global rates still hold with the worst case constant $r = \frac{\mu}{\nu}$.

\begin{figure*}[t!]\label{fig:bench_strong_rule}
  \centering
  \subfigure{\includegraphics[width=0.49\columnwidth]{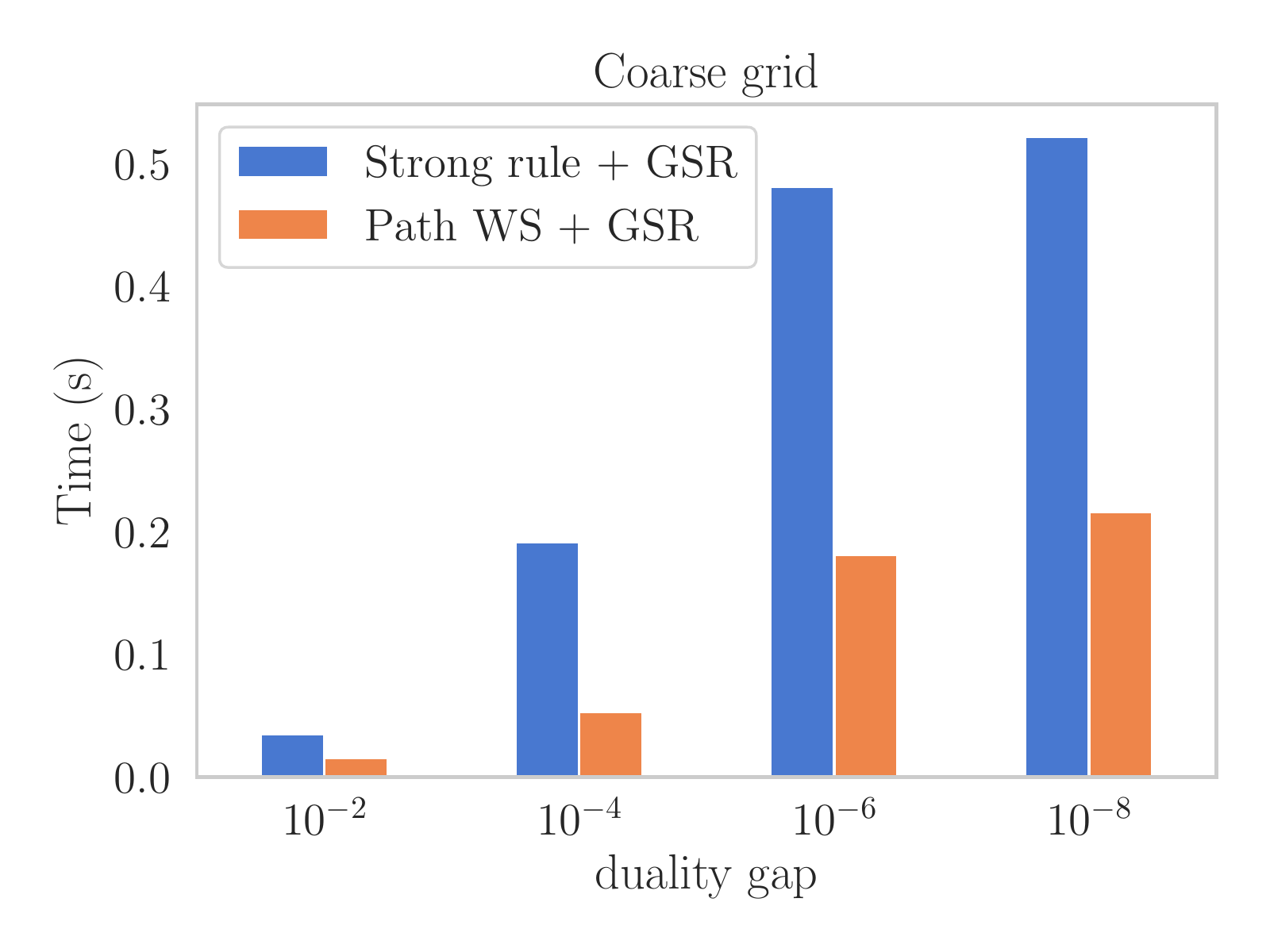}}
  \subfigure{\includegraphics[width=0.49\columnwidth]{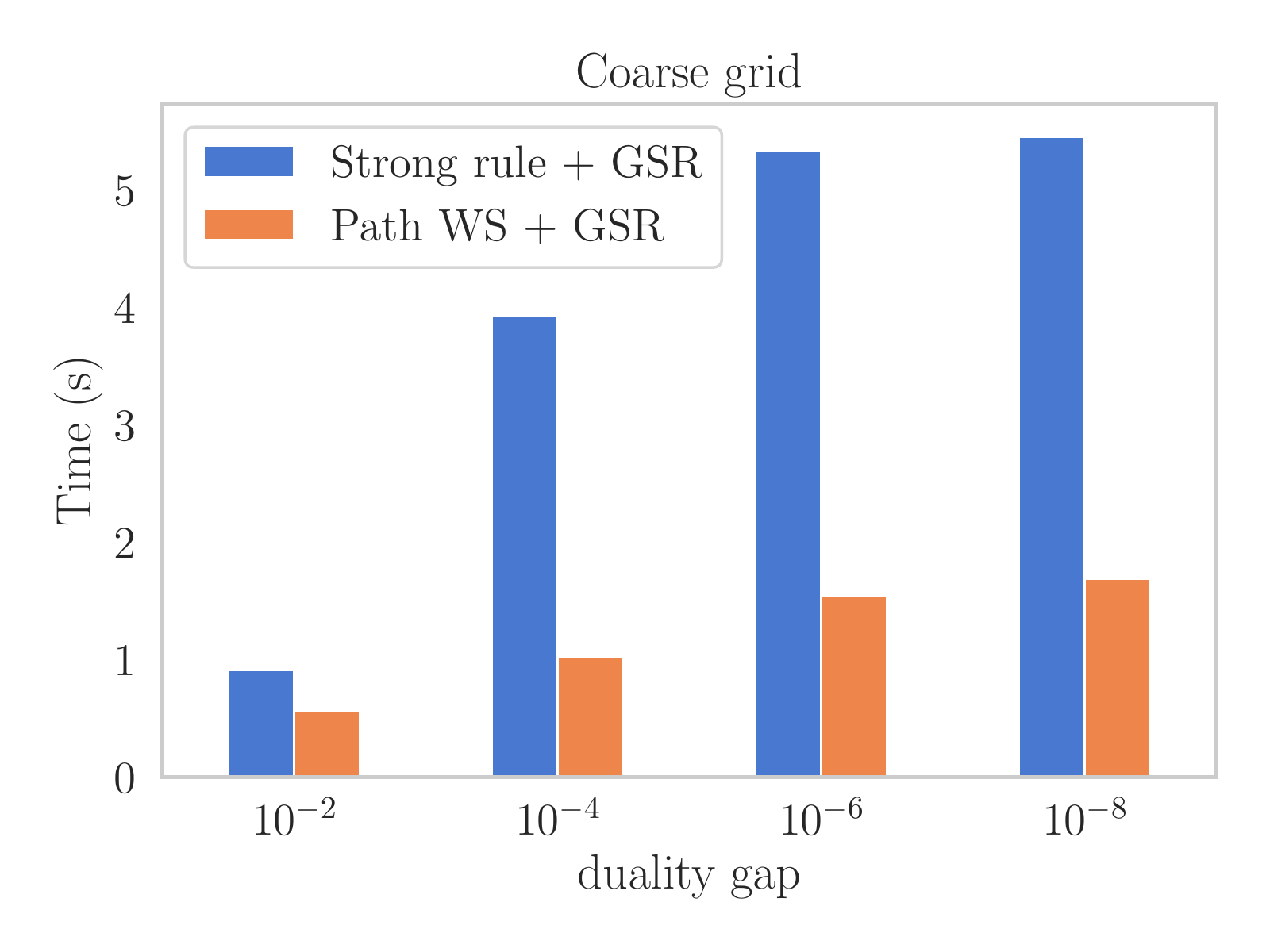}}
  \subfigure{\includegraphics[width=0.49\columnwidth]{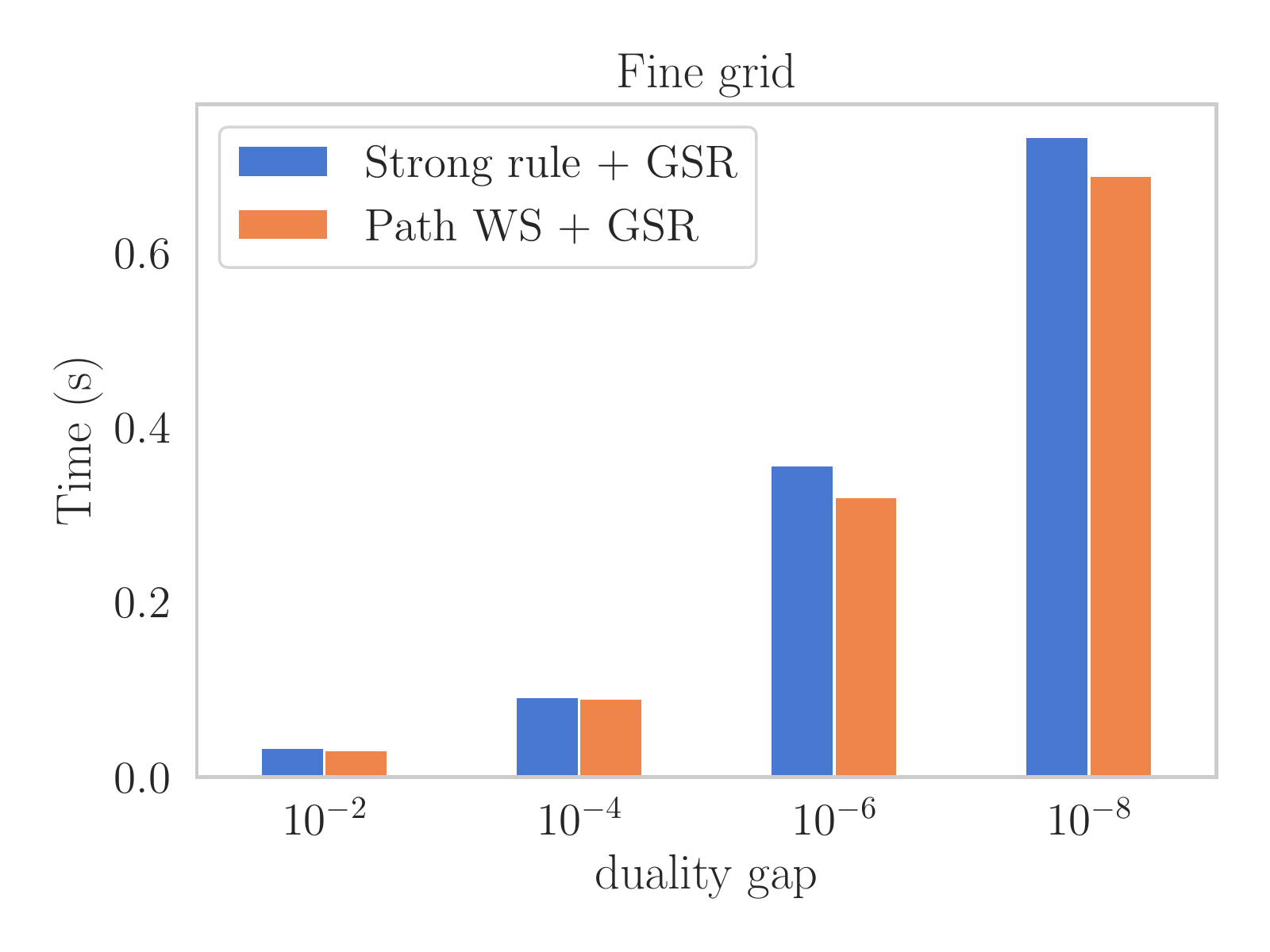}}
  \subfigure{\includegraphics[width=0.49\columnwidth]{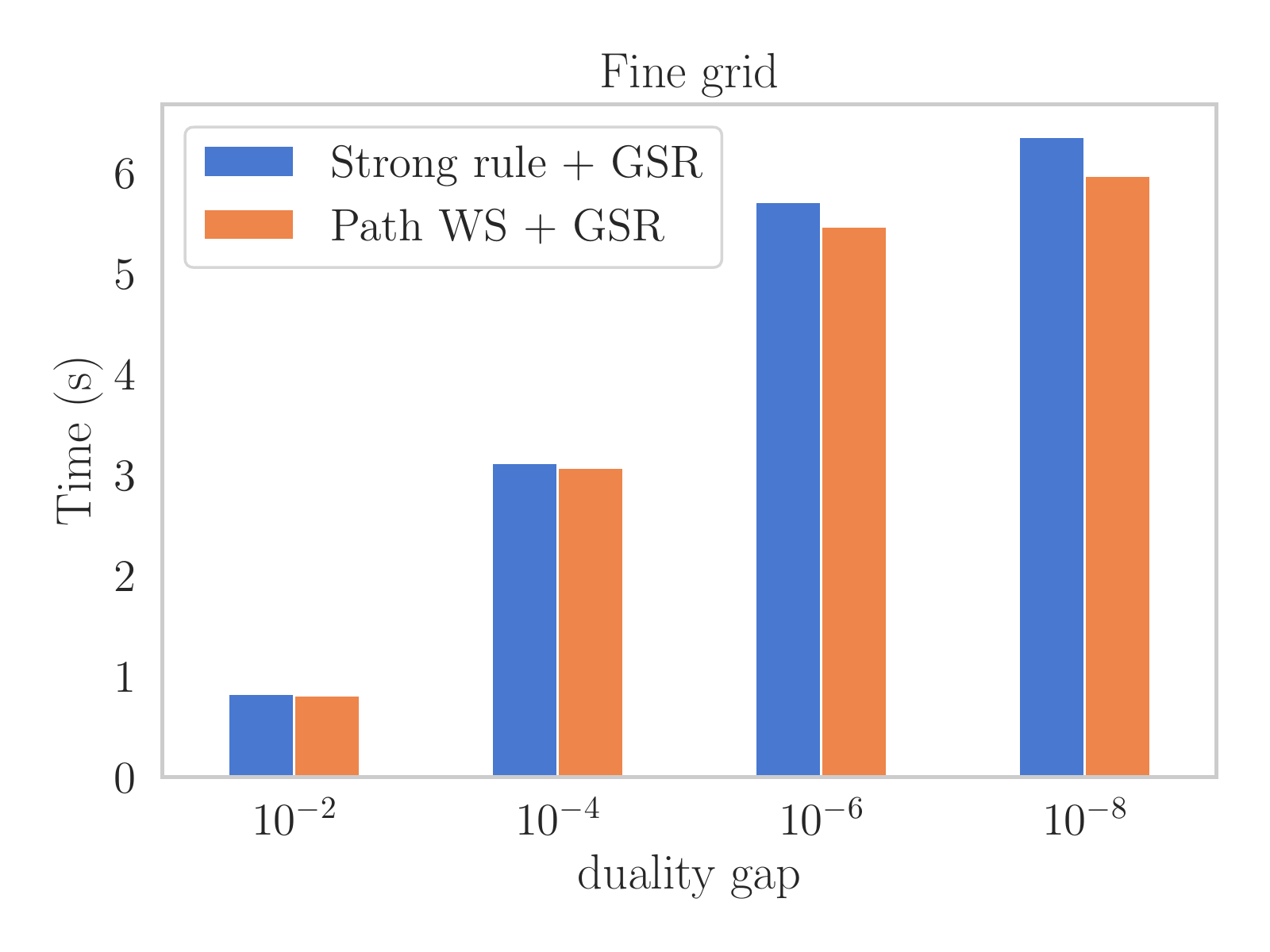}}
  \subfigure{\includegraphics[width=0.49\columnwidth]{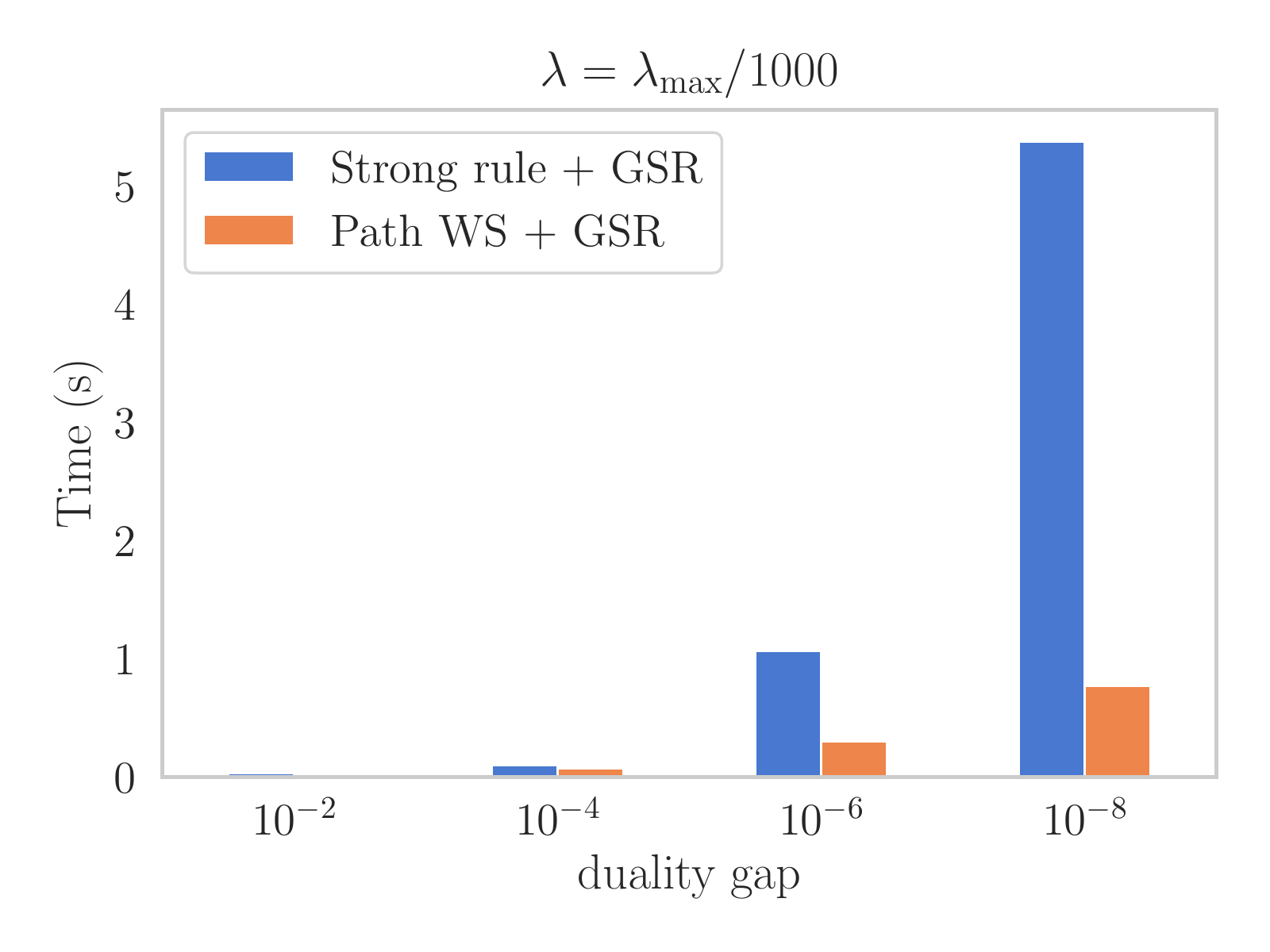}}
  \subfigure{\includegraphics[width=0.49\columnwidth]{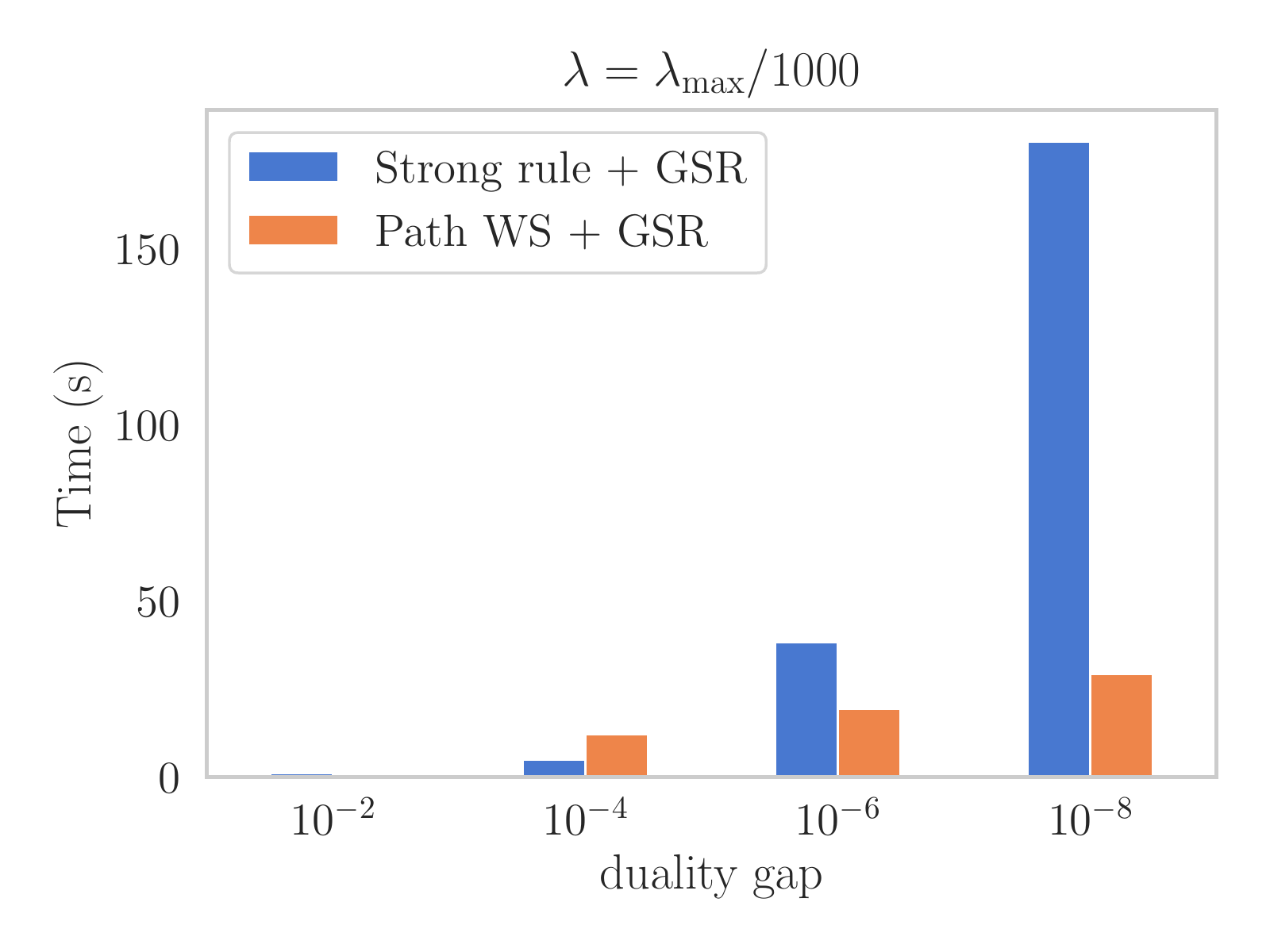}}
 \caption{Computation times needed to solve the Lasso regularization path of size $T=100$ (fine grid) and $T=10$ (coarse grid) starting from $\lambda_{\max}$ to $\lambda_{\max}/100$ at different accuracy. The benchmarks on the Left are launched on \texttt{Leukemia} dataset and the right on \texttt{Climate}.}
\end{figure*}

\subsection{Explicit Control of the Active Sets Sizes}

In this section, we analyze how the sequence of optimal support $\{j \in [p]: \tbeta{\lambda_t}_j \neq 0\}_t$ evolves. We will directly control the number of variable that will enters the active set at the next iteration. To do so, we first quantify the moment where a non active variable becomes active. Under a constraints on the size of the active set, we find the next regularization parameter $\lambda_{t+1}$ as far as possible from $\lambda_t$.


\begin{proposition}
[Path-Following with constrained active set size]
\label{prop:path_constraint_size}
Let $f$ satisfies assumption $(A)$. For any $\lambda' \in [\lambda, \lambda_t]$, it holds:

\begin{enumerate}
\item For any $j$ such that $d_{j}^{2}(\theta_t) > 2\nu \mathcal{E}'_{t}$, we have
$$\text{if }\lambda' \leq \frac{\lambda_t \normin{\theta_t}}{\normin{\theta_t} + \sqrt{d_{j}^{2}(\theta_t) - 2\nu \mathcal{E}'_{t}} } \text{ then } j \in \mathcal{A}_{t}(\lambda')\enspace. $$
\item For any $j$ such that $d_{j}^{2}(\theta_t) > 2\nu \mathcal{E}_{t}^{"}$, we have
$$ \text{if } j \in \mathcal{A}_{t}(\lambda'), \text{ then } \lambda' \leq \frac{\lambda_t \normin{\theta_t}}{\normin{\theta_t} + \sqrt{\frac{\mu}{\nu}\left(d_{j}^{2}(\theta_t)  - 2\nu \mathcal{E}_{t}^{"}\right)}} \enspace.$$ 
\end{enumerate}
\end{proposition}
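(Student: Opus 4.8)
The plan is to feed the two-sided warm-start bound of \Cref{eq:warmstart_bound} into the screening membership criterion $j \in \mathcal{A}_t(\lambda') \Longleftrightarrow d_j(\theta_t) \le r_t(\lambda')$ and then invert the resulting quadratic inequality in $\lambda'$. First I would rewrite the ingredients in a usable form: since $\zeta_t = -\lambda_t \theta_t$ we have $\norm{\zeta_t} = \lambda_t \norm{\theta_t}$, so $V_\tau(\lambda_t, \lambda') = \frac{1}{2\tau}\norm{\theta_t}^{2}(\lambda_t - \lambda')^{2}$ for $\tau \in \{\mu, \nu\}$, and \Cref{eq:warmstart_bound} evaluated at the target $\lambda'$ gives
\[
\mathcal{E}_t(\lambda') + \frac{1}{2\nu}\norm{\theta_t}^{2}(\lambda_t-\lambda')^{2} \;\le\; \Gap_{\lambda'}(\beta_t,\theta_t) \;\le\; \mathcal{E}_t(\lambda') + \frac{1}{2\mu}\norm{\theta_t}^{2}(\lambda_t-\lambda')^{2} \enspace.
\]
Dividing by $(\lambda')^{2}$, multiplying by $2\nu$, recalling $r_t(\lambda')^{2} = \frac{2\nu}{(\lambda')^{2}}\Gap_{\lambda'}(\beta_t,\theta_t)$, and using the sandwich $\mathcal{E}'_{t} \le \frac{1}{(\lambda')^{2}}\mathcal{E}_t(\lambda') \le \mathcal{E}_{t}^{"}$ (which holds for every $\lambda' \in [\lambda,\lambda_t]$, as established in the optimization-error bounds preceding the statement, the $\min/\max$ with $0$ in the definitions absorbing the unknown sign of $\Delta_t$), I would obtain
\[
2\nu\,\mathcal{E}'_{t} + \norm{\theta_t}^{2}\Bigl(\frac{\lambda_t}{\lambda'}-1\Bigr)^{2} \;\le\; r_t(\lambda')^{2} \;\le\; 2\nu\,\mathcal{E}_{t}^{"} + \frac{\nu}{\mu}\norm{\theta_t}^{2}\Bigl(\frac{\lambda_t}{\lambda'}-1\Bigr)^{2} \enspace.
\]

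For the first claim I would use the lower bound: it shows that $d_j^{2}(\theta_t) \le 2\nu\mathcal{E}'_{t} + \norm{\theta_t}^{2}(\frac{\lambda_t}{\lambda'}-1)^{2}$ is already sufficient to force $d_j(\theta_t) \le r_t(\lambda')$, \ie $j \in \mathcal{A}_t(\lambda')$. Under the hypothesis $d_j^{2}(\theta_t) > 2\nu\mathcal{E}'_{t}$ the remainder $d_j^{2}(\theta_t)-2\nu\mathcal{E}'_{t}$ is nonnegative, and since $\lambda'\le\lambda_t$ forces $\frac{\lambda_t}{\lambda'}-1\ge 0$, taking square roots rewrites that sufficient condition as $\sqrt{d_j^{2}(\theta_t)-2\nu\mathcal{E}'_{t}}\le\norm{\theta_t}(\frac{\lambda_t}{\lambda'}-1)$, which rearranges precisely to $\lambda'\le \lambda_t\norm{\theta_t}/(\norm{\theta_t}+\sqrt{d_j^{2}(\theta_t)-2\nu\mathcal{E}'_{t}})$; this is the asserted implication. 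The second claim is the mirror image through the upper bound: if $j\in\mathcal{A}_t(\lambda')$ then $d_j^{2}(\theta_t)\le r_t(\lambda')^{2}\le 2\nu\mathcal{E}_{t}^{"}+\frac{\nu}{\mu}\norm{\theta_t}^{2}(\frac{\lambda_t}{\lambda'}-1)^{2}$, and under $d_j^{2}(\theta_t) > 2\nu\mathcal{E}_{t}^{"}$ taking square roots of $\frac{\mu}{\nu}(d_j^{2}(\theta_t)-2\nu\mathcal{E}_{t}^{"})\le\norm{\theta_t}^{2}(\frac{\lambda_t}{\lambda'}-1)^{2}$ and inverting $x\mapsto\lambda_t/(1+x)$ yields the stated upper bound on $\lambda'$.

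I expect the only real obstacle to be the sign bookkeeping: establishing $\mathcal{E}'_{t} \le \frac{1}{(\lambda')^{2}}\mathcal{E}_t(\lambda') \le \mathcal{E}_{t}^{"}$ uniformly over $\lambda'\in[\lambda,\lambda_t]$ (a short monotonicity argument in $1/\lambda'$, split on the sign of $\Delta_t$, which is essentially the computation already done in the text before the statement), and verifying that the quantities under the square roots are nonnegative — which is exactly what the two case hypotheses $d_j^{2}(\theta_t)>2\nu\mathcal{E}'_{t}$ and $d_j^{2}(\theta_t)>2\nu\mathcal{E}_{t}^{"}$ are there to guarantee, so that both the square-root step and the monotone inversion $x\mapsto\lambda_t/(1+x)$ preserve the direction of the inequalities. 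Everything else is routine algebra.
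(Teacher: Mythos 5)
Your proposal is correct and follows essentially the same route as the paper's own proof: plug the two-sided warm-start bound of \Cref{eq:warmstart_bound} (rescaled by $2\nu/\lambda'^2$) into the membership criterion $j \in \mathcal{A}_t(\lambda') \Leftrightarrow d_j(\theta_t) \le r_t(\lambda')$, use the sandwich $\mathcal{E}'_t \le \frac{1}{\lambda'^2}\mathcal{E}_t(\lambda') \le \mathcal{E}_t^{"}$, and invert the resulting quadratic in $\lambda_t/\lambda'$. The sign and nonnegativity checks you flag are exactly the points the paper also relies on, and they go through as you describe.
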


We can now easily deduce the following control on the active set size which is larger or equal to the cardinality of the support. 

\begin{corollary}
Given the ordered sequence $d_{(1)}(\theta_t) \leq \cdots \leq d_{(p)}(\theta_t)$ and an integer $p_t$ in $[|\mathcal{A}_t|, p]$, it holds
\begin{align*}
&\lambda_{t+1} > \frac{\lambda_t \normin{\theta_t}}{\normin{\theta_t} + \sqrt{\frac{\mu}{\nu}(d_{(p_t)}^{2}(\theta_t) - 2\nu \mathcal{E}_{t}^{"})}} \Longrightarrow |\mathcal{A}_{t}(\lambda_{t+1})| < p_t \enspace,\\
&\lambda_{t+1} \leq \frac{\lambda_t \normin{\theta_t}}{\normin{\theta_t} + \sqrt{d_{(p_t)}^{2}(\theta_t) - 2\nu \mathcal{E}'_{t}} } \Longrightarrow |\mathcal{A}_{t}(\lambda_{t+1})| \geq p_t \enspace.
\end{align*}
Since $\hat{\mathcal{A}}^{(\lambda_{t+1})} \subset \mathcal{A}_{t}(\lambda_{t+1})$, we also have $|\hat{\mathcal{A}}^{(\lambda_{t+1})}| < p_t$.
\end{corollary}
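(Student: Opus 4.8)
The plan is to derive the corollary as a direct consequence of the two implications established in \Cref{prop:path_constraint_size}, combined with a counting argument on the ordered distances $d_{(1)}(\theta_t) \leq \cdots \leq d_{(p)}(\theta_t)$. The key observation is that membership in $\mathcal{A}_t(\lambda_{t+1})$ is, up to the screening test, monotone in the value $d_j(\theta_t)$: variables with small $d_j$ are the ones that stay active, while those with large $d_j$ get screened out. So bounding the cardinality $|\mathcal{A}_t(\lambda_{t+1})|$ amounts to deciding whether the $p_t$-th smallest distance $d_{(p_t)}(\theta_t)$ survives the test at $\lambda_{t+1}$ or not.

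First I would prove the first implication. Suppose $\lambda_{t+1}$ exceeds the stated threshold built from $d_{(p_t)}(\theta_t)$. I want to conclude that fewer than $p_t$ coordinates are active. By the contrapositive of Part 2 of \Cref{prop:path_constraint_size}: if $j \in \mathcal{A}_t(\lambda')$ and $d_j^2(\theta_t) > 2\nu \mathcal{E}_t^{"}$, then $\lambda' \leq \lambda_t\normin{\theta_t}/(\normin{\theta_t} + \sqrt{\tfrac{\mu}{\nu}(d_j^2(\theta_t) - 2\nu\mathcal{E}_t^{"})})$. The right-hand side is an increasing function of $d_j^2(\theta_t)$, so if $\lambda_{t+1}$ is strictly larger than the threshold evaluated at $d_{(p_t)}(\theta_t)$, then no coordinate $j$ with $d_j(\theta_t) \geq d_{(p_t)}(\theta_t)$ can lie in $\mathcal{A}_t(\lambda_{t+1})$ (the monotonicity pushes their threshold even higher, so the inequality $\lambda_{t+1} \leq \cdots$ fails a fortiori; one also needs $d_j^2 > 2\nu\mathcal{E}_t^{"}$, which holds for these $j$ since $d_{(p_t)}^2(\theta_t) - 2\nu\mathcal{E}_t^{"} > 0$ by the hypothesis that the square root is real). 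Hence $\mathcal{A}_t(\lambda_{t+1})$ is contained in $\{j : d_j(\theta_t) < d_{(p_t)}(\theta_t)\}$, a set of size at most $p_t - 1$, giving $|\mathcal{A}_t(\lambda_{t+1})| < p_t$.

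Next I would prove the second implication symmetrically, using Part 1 of \Cref{prop:path_constraint_size}. If $\lambda_{t+1}$ is at most the threshold $\lambda_t\normin{\theta_t}/(\normin{\theta_t} + \sqrt{d_{(p_t)}^2(\theta_t) - 2\nu\mathcal{E}'_t})$, then for every $j$ with $d_j(\theta_t) \leq d_{(p_t)}(\theta_t)$ — again using that the threshold is increasing in $d_j^2$, so these $j$ have an even larger (or equal) threshold — and $d_j^2(\theta_t) > 2\nu\mathcal{E}'_t$, Part 1 gives $j \in \mathcal{A}_t(\lambda_{t+1})$. One must be slightly careful about the coordinates with $d_j^2(\theta_t) \leq 2\nu\mathcal{E}'_t$: these are automatically in $\mathcal{A}_t(\lambda_{t+1})$ regardless, since the screening radius $r_t(\lambda_{t+1})^2 \geq \tfrac{1}{\lambda^2}\cdot \lambda^2 \mathcal{E}'_t \cdot (\text{something}) \geq \mathcal{E}'_t$-type bound keeps them active — more directly, the screening test $d_j(\theta_t) > r_t(\lambda_{t+1})$ can only hold when $d_j$ is large, so small-$d_j$ coordinates never get screened. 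Either way, all $p_t$ coordinates realizing $d_{(1)}, \ldots, d_{(p_t)}$ lie in $\mathcal{A}_t(\lambda_{t+1})$, so $|\mathcal{A}_t(\lambda_{t+1})| \geq p_t$. Finally, the last sentence $|\hat{\mathcal{A}}^{(\lambda_{t+1})}| < p_t$ in the first case follows immediately from the safe-set inclusion $\hat{\mathcal{A}}^{(\lambda_{t+1})} \subset \mathcal{A}_t(\lambda_{t+1})$ recalled just before the lemma.

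I expect the main obstacle to be bookkeeping around the coordinates for which the discriminant $d_j^2(\theta_t) - 2\nu\mathcal{E}'_t$ or $d_j^2(\theta_t) - 2\nu\mathcal{E}_t^{"}$ is non-positive, since \Cref{prop:path_constraint_size} is only stated for $j$ with a strictly positive discriminant; one has to argue separately (via the definition of $\mathcal{A}_t(\gamma)$ and the monotonicity of $r_t(\gamma)$ in $\gamma$) that such coordinates behave in the "active" direction and never undermine the counting. A secondary technical point is confirming the monotonicity in $d_j^2$ of both threshold expressions — this is elementary since $x \mapsto \lambda_t\normin{\theta_t}/(\normin{\theta_t} + \sqrt{c\,(x - a)})$ is decreasing in $x$ for $x > a$, wait — it is decreasing, so larger $d_j^2$ yields a smaller threshold, which is exactly the direction needed for the contrapositive argument in case one and requires care (not the naive "a fortiori") in case two; I would state this monotonicity as an explicit sub-step before invoking the proposition.
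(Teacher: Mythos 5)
Your derivation is correct and is exactly the deduction the paper intends (the corollary is stated without its own proof as an immediate consequence of \Cref{prop:path_constraint_size}): the contrapositive of Part~2 plus monotonicity of the threshold in $d_j^2(\theta_t)$ excludes every $j$ with $d_j(\theta_t) \geq d_{(p_t)}(\theta_t)$ in the first case; Part~1, together with the separate observation that any $j$ with $d_j^2(\theta_t) \leq 2\nu \mathcal{E}'_t$ satisfies $d_j(\theta_t) \leq r_t(\lambda_{t+1})$ and is therefore automatically active, includes every $j$ with $d_j(\theta_t) \leq d_{(p_t)}(\theta_t)$ in the second case; and counting finishes both implications. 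Do make sure the corrected monotonicity statement from your last paragraph is the one that survives: both threshold maps are \emph{decreasing} in $d_j^2(\theta_t)$, and it is with this direction that case one (smaller thresholds for larger $d_j$, so the contrapositive of Part~2 applies) and case two (larger thresholds for smaller $d_j$, so Part~1 applies a fortiori) go through, whereas the ``increasing'' phrasing used earlier in your second and third paragraphs would, read literally, break the a fortiori step.
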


This result provide an explicit and computable correspondence between the number of variables in the active set and the regularization hyperparameter at each step. In the special case of Lasso with exact solution, we have
$$ \lambda_{t+1} = \frac{\lambda_t \normin{\hat\theta_t}}{\normin{\hat\theta_t} + d_{(p_t)}(\hat\theta_t)} \Longleftrightarrow |\mathcal{A}_t(\lambda_{t+1})| = p_t \enspace.$$
With this pathwise screening point of view, we explicitly connect the regularization path strategies with working set selection heuristic that threshold the correlation between the dual variable and the features vector $\mathcal{W}_t = \{j \in[p]:\, d_j(\theta_t) \leq d_{(p_t)}(\theta_t)\}$.
Such working set policy can be interpreted as an unsafe approximation of the safe active set path by dropping the optimization error terms.\\

We can notice that the path with precise control on the active set size suggests a geometric grid, thus corroborating current heuristics. Moreover, such a pathwise following methodologies are analogue to Lars-Lasso algorithm \cite{Efron_Hastie_Johnstone_Tibshirani04} without the expensive Gram matrix inversion at every step. In large scale setting, it is usually favorable to use a first order algorithms to quickly approximate the primal dual optimal solutions; in which case the exact Lars-Lasso step sizes are not available. Therefore, our approach is an analogue of the safe approximation path in \cite{Ndiaye_Le_Fercoq_Salmon_Takeuchi2018} that is tailored on the active set size instead of the optimization error. As a direct consequence, maintaining an active set size increment of one, results in a Lars-Lasso algorithm involving only approximate solution.

\subsection{Pathwise Working Set}

By definition, the safe screening rule presented above have to be conservative enough to avoid false exclusion of any feature. A careful relaxation of the \emph{safety} constraint allows to eliminate more variables and presumably one obtain faster problem solving abilities. In that sense, the \texttt{strong rule} \cite{Tibshirani_Bien_Friedman_Hastie_Simon_Tibshirani12} assumes that the gradient of the loss function is $1$-Lipschitz continuous along the regularization path and removes variable $j$ at the $t+1$ stage when
$$ |X_{j}^{\top} \ttheta{\lambda_{t}}| < \frac{2 \lambda_{t+1} - \lambda_t}{\lambda_t} \enspace.$$
Unfortunately, such assumptions do not hold without stronger assumptions on the design matrix $X$. Whence a correction step must accompany the \texttt{strong rule} strategies. Similarly, recent \emph{working set} approaches \citep{Jonhson_Guestrin15, Massias_Gramfort_Salmon18} prioritize the features by following similar strategy than safe screening methods but without a conservative optimization error term in order to aggressively removes more variables. They solve a sequence of subproblem starting with a small subset of features and iteratively includes more variables until convergence. Each subproblem is launched at the same target regularization parameter $\lambda$. Lately, these strategies have led to considerable improvements in the resolution of Lasso type problem both in classification and regression. These strategies can be harmoniously combined to build a fast solver by taking benefits of both sequence of screening/working sets and sequence of decreasing regularization parameters. Note that
a variable $j$ is removed when 
$$|X_{j}^\top \ttheta{\lambda}| = |X_{j}^\top \nabla f(X\tbeta{\lambda})| < \lambda \enspace,$$ which suggest a straight replacement of $\nabla f(X\tbeta{\lambda_{t+1}})$ by the current estimates $\nabla f(X\beta_t)$, we obtain an optimistic unsafe estimation of the optimal active set $\hat{\mathcal{A}}^{(\lambda)}$ with $\mathcal{W}(\beta_t, \lambda_{t+1})$ where
%
for $\beta \in \bbR^p$ and $\gamma > 0$,
\begin{equation}
\mathcal{W}(\beta, \gamma) = \{j \in [p]:\, |X_{j}^{\top}\nabla f(X\beta)| \geq \gamma\}\enspace.
\end{equation}
Therefore, the cardinality of the working set is directly controlled by the policies for selecting $\lambda_{t+1}$ given the previous parameter $\lambda_t$. Contrarily to the \texttt{strong rule} which is unusable when the step size between two $\lambda$s is larger than $2$, our simpler rule allows more flexibility and one can use the pathwise policies as a rule for active set changes. This is illustrated in \Cref{fig:bench_strong_rule} where a significant gain in computation time is obtained when the regularization path grid is coarse.

We believe that such methods can also be used in \cite{Larsson_Wallin21}'s approach that combines the exploitation of solutions along the path with second order information to obtain better initialization.

\section{Discussions}


Approximate continuation path is used in many optimization solvers widely used in the machine learning community (see \texttt{scikit-learn}\cite{Pedregosa_etal11} or \texttt{glmnet} \cite{Friedman_Hastie_Tibshirani10}). While, in some setting, it offers great numerical performance in terms of computational time, its theoretical properties is still not fully understood. We are not aware of any path design that systematically leads to a significant speed up without additional trick. In this paper, we relied on a theoretical analysis of the rate of convergence of the duality gap at a prescribed parameter $\lambda$ during a pathwise optimization. We show that one can guarantee a linear convergence on the target gap. However, this alone, does not necessarily translate into faster algorithm in all situations.
With additional simplifications to avoid some worst case numerical issues and unnecessary conservativeness, we presented simple variants of approximate continuation path that are easier to implement.\\

For sparse optimization problem, we provided explicit bounds on the size of the active sets along the regularization path. As a consequence, it allows to extend the Lars-Lasso algorithm when iterative solvers are used instead of computing exact solutions. The current successful speed up strategies maintains low iteration complexity when solving sparse problem thanks to two key ingredients. First, generates an adaptive sequence of regularization parameter that makes a fast progress on the target problem or precisely control the size of the sequential safe active set when the regularization induces sparsity. Second, generates a simple sequence of working set along with a careful early stopping criterion that saves computational effort. Both proposed principles for designing a pathwise optimization process lead to a geometric grid which not only provide formal validation of the actual practices, but substantially improves pathwise screening rule solvers when a coarse grid are used.\\

Despite its widespread use, homotopy continuation remains a vaguely justified heuristic that does not necessarily accelerate iterative solvers for generic composite optimization. Even in convex settings, further theoretical investigations will be valuable.

\newpage
\section*{Acknowledgements}
This work was partially supported by MEXT KAKENHI (20H00601, 16H06538), JST CREST (JPMJCR21D3), JST Moonshot R\&D (JPMJMS2033-05), NEDO (JPNP18002, JPNP20006) and RIKEN Center for Advanced Intelligence Project.

\bibliography{references}
\bibliographystyle{plain}

\newpage

\section{Appendix}

In this section, we complete the proofs of the article's propositions.



We start with the following classical result.
\begin{lemma}\label{lm:bound_gradient_objective} If $f$ satisfies assumption $(A)$, we have
\begin{equation*}
\frac{1}{2\nu} \norm{\nabla f(x)}^2 \leq f(x) \leq \frac{1}{2\mu} \norm{\nabla f(x)}^2 \enspace.
\end{equation*}
\end{lemma}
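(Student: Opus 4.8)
The statement to prove is \Cref{lm:bound_gradient_objective}: if $f$ satisfies assumption $(A)$ (i.e.\ $\frac{\mu}{2}\|z-z_0\|^2 \le L_f(z,z_0) \le \frac{\nu}{2}\|z-z_0\|^2$), then $\frac{1}{2\nu}\|\nabla f(x)\|^2 \le f(x) \le \frac{1}{2\mu}\|\nabla f(x)\|^2$. The plan is to exploit the Bregman-divergence inequalities in $(A)$ together with the normalization $\inf_z f(z) = 0$ stated in the problem setup. Recall $L_f(z,z_0) = f(z) - f(z_0) - \langle \nabla f(z_0), z-z_0\rangle$.

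\textbf{Upper bound.} Fix $x$ and set $z_0 = x$ in the left inequality of $(A)$: for every $z$, $f(z) \ge f(x) + \langle \nabla f(x), z - x\rangle + \frac{\mu}{2}\|z-x\|^2$. Since $\inf_z f(z) = 0$, taking the infimum over $z$ on both sides gives $0 \ge f(x) + \inf_z \big(\langle \nabla f(x), z-x\rangle + \frac{\mu}{2}\|z-x\|^2\big)$. The inner infimum is attained at $z - x = -\nabla f(x)/\mu$ and equals $-\frac{1}{2\mu}\|\nabla f(x)\|^2$. Hence $0 \ge f(x) - \frac{1}{2\mu}\|\nabla f(x)\|^2$, which is the claimed upper bound.

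\textbf{Lower bound.} Here I would use the right inequality of $(A)$ ($\nu$-smoothness): $f(z) \le f(x) + \langle \nabla f(x), z-x\rangle + \frac{\nu}{2}\|z-x\|^2$ for all $z$. Take the infimum over $z$ again: since $f(z) \ge 0$ for all $z$, in particular $0 \le \inf_z f(z) \le \inf_z \big(f(x) + \langle \nabla f(x), z-x\rangle + \frac{\nu}{2}\|z-x\|^2\big) = f(x) - \frac{1}{2\nu}\|\nabla f(x)\|^2$, using the same completion-of-the-square computation with $\nu$ in place of $\mu$ and the minimizer $z - x = -\nabla f(x)/\nu$. This rearranges to $\frac{1}{2\nu}\|\nabla f(x)\|^2 \le f(x)$.

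\textbf{Main obstacle.} There is no serious obstacle — the proof is a two-line application of completing the square in the Bregman inequalities. The only subtlety worth flagging is that the result genuinely requires the normalization $\inf_z f(z) = 0$ (equivalently, that $f$ attains its minimum value $0$), which the paper has already assumed without loss of generality; without it one would only get the inequalities up to an additive constant. I would also note that the infimum $\inf_z f(z)$ is attained (by strong convexity, $\mu > 0$), so all manipulations with $\inf$ are legitimate, though even without attainment the inequalities with $\inf$ go through as written.
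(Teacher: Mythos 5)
Your proof is correct and is essentially identical to the paper's: both bounds come from completing the square in the two inequalities of assumption $(A)$ after taking the infimum over $z$ and using the normalization $\inf_z f = 0$ (the paper writes out the smoothness/lower-bound case and notes the strong-convexity/upper-bound case "follows from the same techniques"). Your extra remark about the role of the normalization and the attainment of the infimum is accurate but not needed.
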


\begin{proof}
From the smoothness of $f$, we have:
\begin{equation*}
\inf_z f(z) \leq \inf_z (f(x) + \langle \nabla f(x), z - x \rangle + \frac{\nu}{2}\norm{z - x}^2) = f(x) - \frac{1}{2\nu}\norm{\nabla f(x)}^2 \enspace.
\end{equation*}
The second inequality follows from the same techniques.
\end{proof}

\begin{lemma}[c.f. \Cref{lm:approximate_stepwise_progress}]
We suppose that the function $f$ satisfies assumption $(A)$ and that the monotonicity condition $f(X\beta_{t+1}) \leq f(X\beta_t)$ holds. Then
\begin{align*}
\Gap_{\lambda}(\beta_{t+1}, \theta_{t+1}) - \Gap_{\lambda}(\beta_{t}, \theta_{t}) \leq \mathcal{E}_{t+1} - \mathcal{E}_t - \frac{\delta_t \normin{\zeta_t}_{2}^{2}}{2 \nu} \enspace,
\end{align*}
where
$$\delta_t := \left(1-\frac{\lambda}{\lambda_{t}}\right)^2 - \left(\frac{\alpha_{t}\nu}{\lambda_{t}\mu}\right)^2 \left(1-\frac{\lambda}{\lambda_{t+1}}\right)^2 \enspace.$$
\end{lemma}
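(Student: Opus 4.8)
The plan is to chain together two applications of the warm-start bound from \Cref{eq:warmstart_bound}: one to express $\Gap_{\lambda}(\beta_t,\theta_t)$ in terms of quantities at step $t$, and one to express $\Gap_{\lambda}(\beta_{t+1},\theta_{t+1})$ in terms of quantities at step $t+1$. Specifically, the upper bound at step $t$ gives $\Gap_{\lambda}(\beta_t,\theta_t) \geq \mathcal{E}_t(\lambda) + V_\nu(\lambda_t,\lambda)$, so $-\Gap_{\lambda}(\beta_t,\theta_t) \leq -\mathcal{E}_t(\lambda) - \frac{1}{2\nu}\normin{\zeta_t}^2(1-\lambda/\lambda_t)^2$, while the upper bound at step $t+1$ gives $\Gap_{\lambda}(\beta_{t+1},\theta_{t+1}) \leq \mathcal{E}_{t+1}(\lambda) + V_\mu(\lambda_{t+1},\lambda) = \mathcal{E}_{t+1}(\lambda) + \frac{1}{2\mu}\normin{\zeta_{t+1}}^2(1-\lambda/\lambda_{t+1})^2$. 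Adding these, the target difference is bounded by $\mathcal{E}_{t+1} - \mathcal{E}_t + \frac{1}{2\mu}\normin{\zeta_{t+1}}^2(1-\lambda/\lambda_{t+1})^2 - \frac{1}{2\nu}\normin{\zeta_t}^2(1-\lambda/\lambda_t)^2$.

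To match the claimed form, I then need to control $\normin{\zeta_{t+1}}^2$ in terms of $\normin{\zeta_t}^2$. Recall $\zeta_{t+1} = -\lambda_{t+1}\theta_{t+1}$ and, by the dual construction for the $\ell_1$ case, $\theta_{t+1} = -\nabla f(X\beta_{t+1})/\alpha_{t+1}$, so $\normin{\zeta_{t+1}} = (\lambda_{t+1}/\alpha_{t+1})\normin{\nabla f(X\beta_{t+1})}$. Since $\alpha_{t+1} \geq \lambda_{t+1}$, this is at most $\normin{\nabla f(X\beta_{t+1})}$. Now I use \Cref{lm:bound_gradient_objective}: $\normin{\nabla f(X\beta_{t+1})}^2 \leq 2\nu f(X\beta_{t+1})$, then the monotonicity hypothesis $f(X\beta_{t+1}) \leq f(X\beta_t)$, then again $2\nu f(X\beta_t) \leq (\nu/\mu)\normin{\nabla f(X\beta_t)}^2$. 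Meanwhile $\normin{\nabla f(X\beta_t)} = (\alpha_t/\lambda_t)\normin{\zeta_t}$, so altogether $\normin{\zeta_{t+1}}^2 \leq (\nu/\mu)(\alpha_t/\lambda_t)^2 \normin{\zeta_t}^2$. Substituting this into the $V_\mu$ term yields $\frac{1}{2\mu}\normin{\zeta_{t+1}}^2(1-\lambda/\lambda_{t+1})^2 \leq \frac{1}{2\nu}\normin{\zeta_t}^2 \cdot \frac{\nu^2}{\mu^2}\cdot\frac{\alpha_t^2}{\lambda_t^2}(1-\lambda/\lambda_{t+1})^2$, and collecting the two $\normin{\zeta_t}^2$ terms gives exactly $-\frac{\normin{\zeta_t}^2}{2\nu}\delta_t$ with $\delta_t$ as stated.

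I would present it in this order: first state the two-sided sandwich of \Cref{eq:warmstart_bound} at $t$ and at $t+1$; second, combine into the intermediate inequality involving $\normin{\zeta_{t+1}}^2$; third, carry out the norm-comparison chain using \Cref{lm:bound_gradient_objective}, the identity $\normin{\zeta_t} = (\alpha_t/\lambda_t)\normin{\nabla f(X\beta_t)}$, the bound $\alpha_{t+1}\geq\lambda_{t+1}$, and monotonicity; fourth, substitute and factor out $\normin{\zeta_t}^2/(2\nu)$ to recognize $\delta_t$. The step I expect to be slightly delicate is the norm-comparison chain — one has to be careful that the inequality $\normin{\zeta_{t+1}} \leq \normin{\nabla f(X\beta_{t+1})}$ goes the right direction (it does, since $\alpha_{t+1}\geq\lambda_{t+1}$ makes the ratio $\lambda_{t+1}/\alpha_{t+1}\leq 1$) and that $\normin{\nabla f(X\beta_t)}$ is rewritten exactly as $(\alpha_t/\lambda_t)\normin{\zeta_t}$ rather than with an inequality, so that the $\alpha_t^2/\lambda_t^2$ factor in $\delta_t$ is tight rather than merely an upper bound. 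Everything else is bookkeeping.
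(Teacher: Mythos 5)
Your proof is correct and follows essentially the same route as the paper's: the two-sided sandwich from \Cref{eq:warmstart_bound} applied at steps $t$ and $t+1$, followed by the chain $\normin{\zeta_{t+1}}^2 \leq \normin{\nabla f(X\beta_{t+1})}^2 \leq 2\nu f(X\beta_{t+1}) \leq 2\nu f(X\beta_t) \leq \frac{\nu}{\mu}\normin{\nabla f(X\beta_t)}^2 = \frac{\nu}{\mu}(\alpha_t/\lambda_t)^2\normin{\zeta_t}^2$ using \Cref{lm:bound_gradient_objective}, monotonicity, and $\alpha_{t+1}\geq\lambda_{t+1}$, and then factoring out $\normin{\zeta_t}^2/(2\nu)$. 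The only quibble is terminological — what you call the ``upper bound at step $t$'' is in fact the lower half of the sandwich ($V_\nu \leq \Gap_\lambda - \mathcal{E}_t$) — but the inequality you actually use is the correct one.
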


\begin{proof}
For simplicity, we will temporarily denote $G_t = \Gap_{\lambda}(\beta_{t}, \theta_{t})$.

From \Cref{eq:warmstart_bound}, we have
\begin{align*}
G_{t+1} &\leq \mathcal{E}_{t+1} + \frac{1}{2\mu}\norm{\zeta_{t+1}}^2 \left(1 - \frac{\lambda}{\lambda_{t+1}} \right)^2\\
G_{t} &\geq \mathcal{E}_{t} + \frac{1}{2\nu}\norm{\zeta_t}^2 \left(1 - \frac{\lambda}{\lambda_{t}} \right)^2 \enspace.
\end{align*}
This implies that
\begin{equation}\label{eq:stepwise_diff_ineq}
G_{t+1} - G_{t} \leq \mathcal{E}_{t+1} - \mathcal{E}_{t} + \frac{1}{2\mu}\norm{\zeta_{t+1}}^2 \left(1 - \frac{\lambda}{\lambda_{t+1}} \right)^2 - \frac{1}{2\nu}\norm{\zeta_t}^2 \left(1 - \frac{\lambda}{\lambda_{t}} \right)^2 \enspace.
\end{equation}
From \Cref{lm:bound_gradient_objective} and the pathwise monotonicity assumption on $f$, we have
\begin{equation*}
\frac{1}{2\nu}\norm{\nabla f(X\beta_{t+1})}^2 \leq f(X\beta_{t+1}) \leq f(X\beta_{t}) \leq 
\frac{1}{2\mu}\norm{\nabla f(X\beta_{t})}^2 \enspace.
\end{equation*}
This implies (where we also use $\alpha_{t+1} \geq \lambda_{t+1}$ in the first inequality)
\begin{equation}\label{eq:decrease_residual}
\norm{\zeta_{t+1}}^2 = \left( \frac{\lambda_{t+1}}{\alpha_{t+1}} \norm{\nabla f(X\beta_{t+1})}\right)^2 \leq \frac{\nu}{\mu} \norm{\nabla f(X\beta_{t})}^2 = \frac{\nu}{\mu} \left(\frac{\alpha_t}{\lambda_t}\right)^2 \norm{\zeta_t}^2 \enspace.
\end{equation}
By using \Cref{eq:decrease_residual} into \Cref{eq:stepwise_diff_ineq}, we obtain
\begin{equation*}
G_{t+1} - G_{t} \leq \mathcal{E}_{t+1} - \mathcal{E}_{t} + \frac{1}{2\mu}\frac{\nu}{\mu} \left(\frac{\alpha_t}{\lambda_t}\right)^2 \norm{\zeta_t}^2 \left(1 - \frac{\lambda}{\lambda_{t+1}} \right)^2 - \frac{1}{2\nu}\norm{\zeta_t}^2 \left(1 - \frac{\lambda}{\lambda_{t}} \right)^2 \enspace.
\end{equation*}
The result follows from a factorization of the right hand side.
\end{proof}

\begin{remark}[Variation of the loss function along the path]
Given an algorithm for solving \Cref{eq:primal}, the assumption $f(X\beta_{t+1}) \leq f(X\beta_t)$ might not hold for every iterations. However it always holds at optimality \ie $\lambda \mapsto f(X\tbeta{\lambda})$ is non-increasing \cite[Lemma 7]{Ndiaye_Le_Fercoq_Salmon_Takeuchi2018}. One can either include an additional optimization error term or ensure that the bound holds before stopping the algorithm at time $t+1$. We have considered the latter for simplicity of the analysis.
\end{remark}

\begin{proposition}[c.f. \Cref{prop:fast_path}]
We assume that $\mathrm{(A)}$ holds. For any $r \in [0, \frac{\mu}{\nu}]$, $T \geq 1$, we assume for any $t \in [T]$, $f(X\beta_{t+1}) \leq f(X\beta_t)$ and $\mathcal{E}_{t+1} \leq (1 - r) \mathcal{E}_t + \epsilon_t$.
By selecting $\lambda_{t+1}$ such that
\begin{equation*}
\left(\frac{\alpha_{t}\nu}{\lambda_{t}\mu}\right)^2 \left(1 - \frac{\lambda}{\lambda_{t+1}} \right)^2 \leq \left(1 - r \frac{\nu}{\mu}\right) \left(1 - \frac{\lambda}{\lambda_{t}}\right)^2 - \frac{2 \nu \epsilon_t}{\norm{\zeta_t}^2}\enspace,
\end{equation*}
we have
\begin{equation*}
\Gap_{\lambda}(\beta_T, \theta_T) \leq (1 - r)^T \Gap_{\lambda}(\beta_0, \theta_0) \enspace.
\end{equation*}
\end{proposition}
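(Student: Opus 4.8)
The plan is to combine the stepwise progress bound of \Cref{lm:approximate_stepwise_progress} with the recursive control on the optimization error terms $\mathcal{E}_{t+1} \leq (1-r)\mathcal{E}_t + \epsilon_t$, and to show that together these force the quantity $\Gap_\lambda(\beta_t,\theta_t)$ to contract by a factor $(1-r)$ at every step. The key algebraic observation is that the grid rule $\lambda_{t+1} = \lambda/(1-\sqrt{D(\epsilon_t)})$ is exactly equivalent to $\left(1-\tfrac{\lambda}{\lambda_{t+1}}\right)^2 = D(\epsilon_t)$, and unpacking the definition of $D(\epsilon_t)$ shows this is the same as the inequality
\begin{equation*}
\left(\frac{\alpha_t \nu}{\lambda_t \mu}\right)^2\left(1-\frac{\lambda}{\lambda_{t+1}}\right)^2 \leq \left(1 - r\frac{\nu}{\mu}\right)\left(1-\frac{\lambda}{\lambda_t}\right)^2 - \frac{2\nu\epsilon_t}{\norm{\zeta_t}^2}
\end{equation*}
(with equality, in fact; the condition $D(\epsilon_t)\geq 0$ is just what makes the square root well-defined). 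So the first step is to record this equivalence, which reduces \Cref{prop:fast_path} to the slightly more general statement proved in the appendix where one only assumes this inequality.

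Next I would exploit the stepwise lemma. Rewrite $\delta_t = \left(1-\tfrac{\lambda}{\lambda_t}\right)^2 - \left(\tfrac{\alpha_t\nu}{\lambda_t\mu}\right)^2\left(1-\tfrac{\lambda}{\lambda_{t+1}}\right)^2$; plugging the grid inequality in gives the lower bound $\delta_t \geq r\tfrac{\nu}{\mu}\left(1-\tfrac{\lambda}{\lambda_t}\right)^2 + \tfrac{2\nu\epsilon_t}{\norm{\zeta_t}^2}$. Therefore
\begin{equation*}
\frac{\delta_t\norm{\zeta_t}^2}{2\nu} \geq \frac{r}{2\mu}\norm{\zeta_t}^2\left(1-\frac{\lambda}{\lambda_t}\right)^2 + \epsilon_t.
\end{equation*}
Substituting into the conclusion of \Cref{lm:approximate_stepwise_progress}, and using the lower bound $\Gap_\lambda(\beta_t,\theta_t) \geq \mathcal{E}_t + \tfrac{1}{2\mu}\norm{\zeta_t}^2\left(1-\tfrac{\lambda}{\lambda_t}\right)^2$ from \Cref{eq:warmstart_bound} (with $\tau=\mu$, which is the valid direction for the upper envelope on $\Gap_\lambda - \mathcal{E}_t$; here I need to be careful to use $V_\mu$ as the upper bound so that $\Gap_\lambda(\beta_t,\theta_t)-\mathcal{E}_t \leq V_\mu$ — wait, I actually need a lower bound on $\Gap_\lambda - \mathcal{E}_t$, which is $V_\nu$; so the bound I want is $\Gap_\lambda(\beta_t,\theta_t) \geq \mathcal{E}_t + \tfrac{1}{2\nu}\norm{\zeta_t}^2(1-\lambda/\lambda_t)^2$, and I should match constants accordingly — this bookkeeping of $\mu$ versus $\nu$ on the envelopes is exactly where care is needed). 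Combining the three displays, one gets $\Gap_{t+1}^{\lambda} - \Gap_t^{\lambda} \leq \mathcal{E}_{t+1} - (1-r)\mathcal{E}_t - \epsilon_t - r\big(\Gap_t^\lambda - \mathcal{E}_t\big) \leq -r\,\Gap_t^\lambda$, using $\mathcal{E}_{t+1} \leq (1-r)\mathcal{E}_t + \epsilon_t$. That is, $\Gap_\lambda(\beta_{t+1},\theta_{t+1}) \leq (1-r)\Gap_\lambda(\beta_t,\theta_t)$.

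Finally, iterating this one-step contraction from $t=0$ to $t=T-1$ yields $\Gap_\lambda(\beta_T,\theta_T) \leq (1-r)^T \Gap_\lambda(\beta_0,\theta_0)$, which is the claim; feeding this into the requirement $\Gap_\lambda(\beta_T,\theta_T)\leq\epsilon$ gives the grid-size bound \Cref{eq:max_size} by taking logarithms.

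The main obstacle I expect is not conceptual but the careful matching of the $\mu$ and $\nu$ constants across the two-sided envelope in \Cref{eq:warmstart_bound}: the stepwise lemma already committed to using the $1/2\nu$ lower envelope at step $t$ and the $1/2\mu$ upper envelope at step $t+1$, the residual-monotonicity bound \Cref{eq:decrease_residual} introduces another factor $\nu/\mu$ via $\alpha_t/\lambda_t$, and the definition of $D(\epsilon)$ carries the compensating $(\lambda_t\mu/\alpha_t\nu)^2$ prefactor — so one has to verify that all of these cancel to leave a clean factor $r$ multiplying $\Gap_t^\lambda$, and in particular that the hypothesis $r \in (0,\mu/\nu)$ is precisely what keeps $1 - r\nu/\mu > 0$ so that $D(\epsilon_t)$ can be made nonnegative by choosing $\epsilon_t$ small. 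A secondary point worth stating explicitly is that the monotonicity hypothesis $f(X\beta_{t+1})\leq f(X\beta_t)$ is needed only to invoke \Cref{lm:approximate_stepwise_progress}, and as the remark notes it can be enforced at the early-stopping check; no extra argument is required here.
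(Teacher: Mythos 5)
Your proposal follows the paper's proof essentially line for line: unpack $D(\epsilon_t)$ to get the stated inequality, lower-bound $\delta_t\norm{\zeta_t}^2/(2\nu)$ by $\epsilon_t + \tfrac{r}{2\mu}\norm{\zeta_t}^2(1-\lambda/\lambda_t)^2$, combine with \Cref{lm:approximate_stepwise_progress} and the error recursion to get a one-step contraction, and iterate. The outline is correct.

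One point needs fixing, and it is exactly the point you flagged as delicate and then resolved the wrong way. The envelope you need from \Cref{eq:warmstart_bound} is the \emph{upper} bound $\Gap_{\lambda}(\beta_t,\theta_t) - \mathcal{E}_t \leq V_\mu(\lambda_t,\lambda)$, i.e.\ $G_t \leq \mathcal{E}_t + \tfrac{1}{2\mu}\norm{\zeta_t}^2(1-\lambda/\lambda_t)^2$, \emph{not} the lower bound with $V_\nu$. The reason: the grid condition hands you the negative term $-\tfrac{r}{2\mu}\norm{\zeta_t}^2(1-\lambda/\lambda_t)^2$, and to replace it by $-r(G_t-\mathcal{E}_t)$ you must know that $\tfrac{1}{2\mu}\norm{\zeta_t}^2(1-\lambda/\lambda_t)^2$ dominates $G_t-\mathcal{E}_t$; the $V_\nu$ lower bound gives the reverse inequality and cannot close the argument. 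Your displayed chain implicitly uses $V_\mu$ anyway (so the conclusion stands), but the written justification contradicts it. There is also a small transcription slip in that chain: after splitting $-\mathcal{E}_t = -(1-r)\mathcal{E}_t - r\mathcal{E}_t$ and absorbing $-r\mathcal{E}_t$ into $-r(G_t-\mathcal{E}_t)$, the middle expression should read $\mathcal{E}_{t+1} - (1-r)\mathcal{E}_t - \epsilon_t - rG_t$, which is then $\leq -rG_t$ by the assumed recursion on $\mathcal{E}$.
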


\begin{proof}
Let us denote $G_t = \Gap_{\lambda}(\beta_{t}, \theta_{t})$. By assumption and definition of $\delta_t$, we have
\begin{align*}
\frac{1}{2\nu}\norm{\zeta_t}^2 \delta_t \geq \epsilon_t  + \frac{r}{2\mu}\norm{\zeta_t}^2\left(1 - \frac{\lambda}{\lambda_t} \right)^2 \text{ and } \epsilon_t \geq (r-1) \mathcal{E}_{t} + \mathcal{E}_{t+1} \enspace.
\end{align*}
This implies that
\begin{align*}
&\qquad \frac{1}{2\nu}\norm{\zeta_t}^2 \delta_t \geq (r-1) \mathcal{E}_{t} + \mathcal{E}_{t+1}  + \frac{r}{2\mu}\norm{\zeta_t}^2\left(1 - \frac{\lambda}{\lambda_t} \right)^2 \\
&\Longleftrightarrow
\mathcal{E}_{t+1} - \mathcal{E}_{t} - \frac{1}{2\nu}\norm{\zeta_t}^2 \delta_t \leq -r \left[\mathcal{E}_t + \frac{1}{2\mu}\norm{\zeta_t}^2\left(1 - \frac{\lambda}{\lambda_t} \right)^2\right]\\
&\Longrightarrow
\mathcal{E}_{t+1} - \mathcal{E}_{t} - \frac{1}{2\nu}\norm{\zeta_t}^2 \delta_t \leq -r G_t \qquad \text{ (upper bound in \Cref{eq:warmstart_bound}) }\\
&\Longrightarrow G_{t+1} - G_t \leq - r G_t \enspace,
\end{align*}
where the last inequality comes from \Cref{lm:approximate_stepwise_progress}. Whence
$$ G_T \leq (1 - r) G_{T-1} \leq (1 - r)^2 G_{T-2} \leq \cdots \leq (1 - r)^T G_{0} \enspace.$$ 
\end{proof}

\begin{proposition}[c.f. \Cref{prop:path_constraint_size}]
Let $f$ satisfies assumption $(A)$. For any $\lambda' \in [\lambda, \lambda_t]$, it holds:

\begin{enumerate}
\item For any $j$ such that $d_{j}^{2}(\theta_t) > 2\nu \mathcal{E}'_{t}$, we have
$$\text{if }\lambda' \leq \frac{\lambda_t \normin{\theta_t}}{\normin{\theta_t} + \sqrt{d_{j}^{2}(\theta_t) - 2\nu \mathcal{E}'_{t}} } \text{ then } j \in \mathcal{A}_{t}(\lambda')\enspace. $$
\item For any $j$ such that $d_{j}^{2}(\theta_t) > 2\nu \mathcal{E}_{t}^{"}$, we have
$$ \text{if } j \in \mathcal{A}_{t}(\lambda'), \text{ then } \lambda' \leq \frac{\lambda_t \normin{\theta_t}}{\normin{\theta_t} + \sqrt{\frac{\mu}{\nu}\left(d_{j}^{2}(\theta_t)  - 2\nu \mathcal{E}_{t}^{"}\right)}} \enspace.$$ 
\end{enumerate}
\end{proposition}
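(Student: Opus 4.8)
The plan is to reduce both parts to the warm-start sandwich of \Cref{eq:warmstart_bound} plus one elementary algebraic manipulation. The starting point is to unfold the definition of the sequential safe active set: $j \in \mathcal{A}_t(\lambda')$ holds exactly when $d_j(\theta_t) \le r_t(\lambda')$, i.e. when $d_j^2(\theta_t) \le \frac{2\nu}{\lambda'^2}\Gap_{\lambda'}(\beta_t,\theta_t)$. I would then bound $\Gap_{\lambda'}(\beta_t,\theta_t)$ by invoking \Cref{eq:warmstart_bound} with the target parameter taken to be $\lambda'$ (legitimate since $\lambda' \in [\lambda,\lambda_t]$): for the \emph{sufficient} condition in (1) I use the lower bound $\Gap_{\lambda'}(\beta_t,\theta_t) \ge \mathcal{E}_t(\lambda') + \frac{1}{2\nu}\norm{\zeta_t}^2(1-\lambda'/\lambda_t)^2$, and for the \emph{necessary} condition in (2) I use the upper bound, which replaces $\nu$ by $\mu$ in that quadratic term. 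Throughout I substitute $\zeta_t = -\lambda_t\theta_t$, so that $\frac{1}{\lambda'^2}\norm{\zeta_t}^2(1-\lambda'/\lambda_t)^2$ becomes $\norm{\theta_t}^2\bigl(\frac{\lambda_t}{\lambda'}-1\bigr)^2$, a monotone function of $\lambda'$ that is the source of the threshold in the statement.

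For part (1), after the lower bound I replace $\frac{1}{\lambda'^2}\mathcal{E}_t(\lambda')$ by the uniform lower bound $\mathcal{E}'_t$ recorded in the main text, obtaining $\frac{2\nu}{\lambda'^2}\Gap_{\lambda'}(\beta_t,\theta_t) \ge 2\nu\,\mathcal{E}'_t + \norm{\theta_t}^2\bigl(\frac{\lambda_t}{\lambda'}-1\bigr)^2$; hence membership in $\mathcal{A}_t(\lambda')$ is guaranteed as soon as $d_j^2(\theta_t) - 2\nu\,\mathcal{E}'_t \le \norm{\theta_t}^2\bigl(\frac{\lambda_t}{\lambda'}-1\bigr)^2$. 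Under the standing hypothesis $d_j^2(\theta_t) > 2\nu\,\mathcal{E}'_t$ both sides are nonnegative and $\frac{\lambda_t}{\lambda'}-1 \ge 0$ (since $\lambda'\le\lambda_t$), so I may take square roots, rearrange to $\frac{\lambda_t}{\lambda'} \ge \frac{\norm{\theta_t}+\sqrt{d_j^2(\theta_t)-2\nu\mathcal{E}'_t}}{\norm{\theta_t}}$, and invert to get the claimed upper bound on $\lambda'$. Part (2) is the mirror image: from $d_j^2(\theta_t) \le \frac{2\nu}{\lambda'^2}\Gap_{\lambda'}(\beta_t,\theta_t)$, the upper warm-start bound, and $\frac{1}{\lambda'^2}\mathcal{E}_t(\lambda') \le \mathcal{E}_{t}^{"}$ one gets $d_j^2(\theta_t) \le 2\nu\,\mathcal{E}_{t}^{"} + \frac{\nu}{\mu}\norm{\theta_t}^2\bigl(\frac{\lambda_t}{\lambda'}-1\bigr)^2$, and the same square-root/inversion step — now with an extra factor $\frac{\mu}{\nu}$ emerging from the $V_\mu$ term — yields the stated necessary condition.

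The genuinely delicate parts are bookkeeping, not substance. First I would have to be sure the uniform estimates $\mathcal{E}'_t \le \frac{1}{\lambda'^2}\mathcal{E}_t(\lambda') \le \mathcal{E}_{t}^{"}$ hold for every $\lambda'\in[\lambda,\lambda_t]$: this is precisely where the hypothesis $\lambda'\ge\lambda$ is used, and it amounts to checking that the $\lambda'$-dependent coefficients multiplying $\Gap_t$ and $\Delta_t$ in $\frac{1}{\lambda'^2}\mathcal{E}_t(\lambda')$ are monotone on that interval, with the two sign cases of $\Delta_t$ absorbed by the $\min(0,\Delta_t)$ and $\max(0,\Delta_t)$ appearing in the definitions of $\mathcal{E}'_t$ and $\mathcal{E}_{t}^{"}$. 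Second, the square-root step needs sign discipline: one must use $\lambda'\le\lambda_t$ so that $\frac{\lambda_t}{\lambda'}-1\ge 0$, and the radicands on the right are made nonnegative exactly by the hypotheses $d_j^2(\theta_t) > 2\nu\mathcal{E}'_t$ (resp. $> 2\nu\mathcal{E}_{t}^{"}$); once these hold, $\lambda'\mapsto\frac{\lambda_t}{\lambda'}$ is a decreasing bijection and the inversion is immediate. None of this is hard, but it is where an error would most likely creep in.
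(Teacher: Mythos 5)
Your proposal is correct and follows essentially the same route as the paper's proof: unfold $j \in \mathcal{A}_t(\lambda')$ as $d_j(\theta_t) \le r_t(\lambda')$, sandwich $\Gap_{\lambda'}(\beta_t,\theta_t)$ via \Cref{eq:warmstart_bound}, replace $\tfrac{1}{\lambda'^2}\mathcal{E}_t(\lambda')$ by the uniform bounds $\mathcal{E}'_t$ and $\mathcal{E}_t^{"}$, and invert the resulting monotone inequality in $\lambda_t/\lambda'$. The only difference is that you make explicit the monotonicity and sign checks that the paper leaves implicit, and your verification of them is accurate.
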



\begin{proof}
We remind that $r_t(\lambda) = \sqrt{\frac{2\nu}{\lambda^2} \Gap_{\lambda}(\beta_t, \theta_t)}$ and $\norm{\zeta_t} = \lambda_t \norm{\theta_t}$. From \Cref{eq:warmstart_bound}, we have
\begin{align*}
%
\norm{\zeta_t}^2 \left(\frac{1}{\lambda} - \frac{1}{\lambda_t} \right)^2 &\leq
r_{t}^{2}(\lambda) - \frac{2\nu}{\lambda^2} \mathcal{E}_t(\lambda) \leq \frac{\nu}{\mu} \norm{\zeta_t}^2 \left(\frac{1}{\lambda} - \frac{1}{\lambda_t} \right)^2 \enspace.
\end{align*}
A coordinate $j \in [p]$ is in the safe active set at parameter $\lambda$ \ie $j \in \mathcal{A}_t(\lambda) \Longleftrightarrow r_t(\lambda) \geq d_j(\theta_t)$
\begin{align*}
\lambda \leq \frac{\lambda_t \normin{\theta_t}}{\normin{\theta_t} + \sqrt{d_{j}^{2}(\theta_t) - 2\nu \mathcal{E}'_{t}} } &\Longrightarrow
\frac{2\nu}{\lambda^2} \mathcal{E}_t(\lambda) +
\norm{\zeta_t}^2 \left(\frac{1}{\lambda} - \frac{1}{\lambda_t} \right)^2 \geq d_{j}^{2}(\theta_t) \\
&\Longrightarrow r_t(\lambda) \geq d_j(\theta_t) \enspace.
\end{align*}

A coordinate $j \in [p]$ is not in the safe active set at parameter $\lambda$ \ie $j \notin \mathcal{A}_t(\lambda) \Longleftrightarrow r_t(\lambda) < d_j(\theta_t)$
\begin{align*}
\lambda > \frac{\lambda_t \normin{\theta_t}}{\normin{\theta_t} + \sqrt{\frac{\mu}{\nu}} \sqrt{d_{j}^{2}(\theta_t) - 2 \nu \mathcal{E}_{t}^{"}} } &\Longrightarrow
\frac{2\nu}{\lambda^2} \mathcal{E}_t(\lambda) +
\frac{\nu}{\mu} \norm{\zeta_t}^2 \left(\frac{1}{\lambda} - \frac{1}{\lambda_t} \right)^2 < d_{j}^{2}(\theta_t) \\
&\Longrightarrow r_t(\lambda) < d_j(\theta_t) \enspace.
\end{align*}
\end{proof}

\end{document}